\documentclass[11pt,a4paper]{article}
\usepackage[T1]{fontenc}
\usepackage{lmodern}
\usepackage[margin=25mm]{geometry}
\usepackage{amsmath,amssymb,amsthm,mathtools}
\usepackage{xcolor}
\definecolor{linkblue}{RGB}{0,76,153}
\usepackage{microtype}
\usepackage{enumitem}
\usepackage{needspace}
\usepackage{xurl}
\usepackage[hidelinks]{hyperref}
\hypersetup{pdftitle={Periodic Joint Co-tiles of Independent Tiles},
 pdfauthor={Hu Tan and Ying Zhang}}
\allowdisplaybreaks[2]
\newtheorem{theorem}{Theorem}[section]
\newtheorem{lemma}[theorem]{Lemma}
\newtheorem{proposition}[theorem]{Proposition}
\newtheorem{corollary}[theorem]{Corollary}
\theoremstyle{definition}
\newtheorem{definition}[theorem]{Definition}
\newtheorem{example}[theorem]{Example}
\newtheorem{remark}[theorem]{Remark}
\newcommand{\Z}{\mathbb Z}
\newcommand{\Q}{\mathbb Q}
\newcommand{\R}{\mathbb R}

\newcommand{\ind}{\mathbf1}
\newcommand{\supp}{\operatorname{supp}}
\newcommand{\spanq}{\operatorname{span}_{\Q}}
\newcommand{\orb}{\operatorname{Orb}}

\numberwithin{equation}{section}
\title{Periodic Joint Co-tiles of Independent Tiles}
\author{\begin{tabular}[t]{c}
Hu Tan\textsuperscript{1}\\[3pt]
\small\textsuperscript{1}Academy of Mathematics and Systems Science\\
\small Chinese Academy of Sciences\\
\small Beijing 100190, China\\
\small\texttt{tanhu2020@amss.ac.cn}
\end{tabular}
\and
\begin{tabular}[t]{c}
Ying Zhang\textsuperscript{2}\\[3pt]
\small\textsuperscript{2}School of Mathematical Sciences\\
\small Soochow University\\
\small Suzhou 215006, China\\
\small\texttt{yzhang@suda.edu.cn}
\end{tabular}}
\date{}
\begin{document}
\maketitle
\begin{abstract}
For every $d\geq2$, we prove that $d-1$ independent finite tiles in $\Z^d$ admit a fully periodic joint co-tile whenever they admit a joint co-tile. Here independence means that the tiles contain the origin and every choice of one nonzero vector from each tile is linearly independent. This removes property $(\star)$ from the periodic existence theorem of Meyerovitch, Sanadhya and Solomon and recovers Bhattacharya's theorem when $d=2$. More generally, we prove periodic existence for any finite family admitting a joint co-tile whose indicator is a fully periodic real function plus finitely many bounded real functions, each invariant under a subgroup of rank $d-1$. Extending the Boolean normal-form method of Greenfeld and Tao, we obtain a configuration in the translation orbit closure whose restrictions to cosets of a finite-index lattice are either invariant under a rank-$(d-1)$ subgroup or given by a three-term affine fractional-part formula with density one half. The tiling equations pair the restrictions given by this formula and determine a finite bipartite graph. A two-coloring replaces those restrictions by constants while preserving every tiling equation. We deduce a characterization of periodic tilability by independent companions and a decision algorithm for families containing an independent $(d-1)$-subfamily, with the dimension as part of the input.
\end{abstract}
\noindent\textit{2020 Mathematics Subject Classification.} Primary 52C22; Secondary 37B10, 05B45.\\
\noindent\textit{Keywords.} Translational tiling; joint co-tile; periodic tiling; independent tiles; symbolic dynamics.

\section{Introduction and main results}\label{sec:intro}
Let $F$ be a nonempty finite subset of $\Z^d$. A set $A\subset\Z^d$ is a \emph{co-tile} of $F$ if every point of $\Z^d$ has a unique representation as $f+a$ with $f\in F$ and $a\in A$. We write
\[
 F\oplus A=\Z^d,
 \qquad\text{equivalently}\qquad
 \ind_F*\ind_A=1.
\]
Here $\ind_B$ denotes the indicator of a set $B$. A \emph{joint co-tile} of $F_1,\ldots,F_k$ is a single set $A$ satisfying $\ind_{F_i}*\ind_A=1$ for every $i$. A \emph{Boolean configuration} is a function from $\Z^d$ to $\{0,1\}$, and a finite tile is \emph{nontrivial} if it has at least two points.

A subgroup $L\leq\Z^d$ is a \emph{period subgroup} of a function $u$ if $u(x+h)=u(x)$ for every $x\in\Z^d$ and $h\in L$. Periodicity of a set means periodicity of its indicator. A set or function is \emph{fully periodic} if it has a finite-index period subgroup. Equivalently, it is invariant under $q\Z^d$ for some integer $q\geq1$; we call such a $q$ a \emph{scalar period}.

The periodic tiling problem asks whether a finite set admitting a co-tile must admit a fully periodic one. In one dimension, Newman~\cite{N77} proved the stronger assertion that every co-tile of a finite subset of $\Z$ is periodic. Bhattacharya~\cite{B20} established periodic existence in $\Z^2$. Greenfeld and Tao~\cite{GT21} subsequently proved that every planar co-tile is piecewise one-periodic and obtained quantitative bounds for periodic replacements. Their later counterexample~\cite{GT24} shows that periodic existence fails for general finite tiles in sufficiently high dimension. Thus an analogue of the planar existence theorem in arbitrary dimension requires additional hypotheses.

Meyerovitch, Sanadhya and Solomon~\cite{MSS24} developed a higher-dimensional framework based on a geometric independence condition for simultaneous tiling equations. We use their definition.

\Needspace{7\baselineskip}
\begin{definition}\label{def:independent}
An ordered tuple $(F_1,\ldots,F_k)$ of finite subsets of $\Z^d$ is \emph{independent} if $0\in F_i$ for every $i$, and every choice
\[
 v_i\in F_i\setminus\{0\}\qquad(1\leq i\leq k)
\]
is linearly independent over $\Q$.
\end{definition}

For every $t\in\Z^d$, the equations $F\oplus A=\Z^d$ and $(F-t)\oplus A=\Z^d$ are equivalent. Thus each tile can be normalized to contain $0$ without changing its co-tiles. Independence refers to the specified normalized sets; it is not asserted to be independent of the chosen normalizing points. If one tile in a family is a singleton, any joint co-tile must be $\Z^d$, and the other tiling equations then force every tile in the family to be a singleton.

Meyerovitch, Sanadhya and Solomon proved that every joint co-tile of $d$ independent tiles in $\Z^d$ is fully periodic~\cite[Theorem 1.2]{MSS24}. For $d-1$ independent tiles, they obtained periodic existence under property $(\star)$~\cite[Corollary 1.7]{MSS24}. Immediately after that corollary they asked whether $(\star)$ is necessary for the existence of a periodic joint co-tile~\cite[p.~5]{MSS24}. The question is already present in the first version of their paper.\footnote{The first version, \href{https://arxiv.org/abs/2301.11255v1}{arXiv:2301.11255v1}, was submitted on 26 January 2023; there the question follows Corollary 1.6. We use the theorem numbering of the published version throughout.} Our main theorem strengthens their existence result by retaining independence and dropping $(\star)$.

\begin{theorem}\label{thm:main}
Let $d\geq2$, and let $(F_1,\ldots,F_{d-1})$ be an independent tuple of nonempty finite subsets of $\Z^d$. If the tiles have a joint co-tile, then they have a fully periodic joint co-tile.
\end{theorem}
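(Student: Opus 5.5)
The strategy follows the abstract: first obtain a structural decomposition of the indicator of some joint co-tile, then use a two-coloring argument to make it fully periodic.

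\emph{Step 1: structure of a joint co-tile.} Let $A$ be a joint co-tile and set $u=\ind_A$. For each $i$, write $F_i=\{0\}\cup F_i'$. Following the dilation argument of Meyerovitch--Sanadhya--Solomon (and Greenfeld--Tao), for every integer $r$ coprime to $|F_i|$ the dilate $rF_i$ also satisfies $\ind_{rF_i}*u=1$. Taking $r\equiv1$ modulo a suitable power and passing to polynomial identities in the group ring, each tiling equation factors into a sum of terms. Each term is annihilated by an operator $(\delta_0-\delta_{q v})$ with $v\in F_i'$. This is the structure theorem: $u$ is a sum of functions each periodic along one vector from each $F_j'$. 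Now choose one vector $v_i\in F_i'$ for every $i$. By independence the $d-1$ vectors $v_1,\dots,v_{d-1}$ are linearly independent. A function invariant under all of them is invariant under a rank-$(d-1)$ subgroup. I therefore expect to write
\[
u=p+\sum_{j=1}^{m}g_j ,
\]
where $p$ is fully periodic and real-valued, and each $g_j$ is bounded, real-valued, and invariant under a rank-$(d-1)$ subgroup $L_j$.

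\emph{Step 2: reduction to the general theorem.} The statement then follows from the more general periodic existence result mentioned in the abstract: a finite family with a joint co-tile of this form has a fully periodic joint co-tile. To prove that result, I would pass to the translation orbit closure of $u$. This is legitimate because any limit of translates is still a joint co-tile, and it still admits a decomposition of the same shape. Fix a finite-index lattice $\Lambda$ contained in the periods of $p$ and adapted to the lattices $L_j$. Using the Boolean normal-form method of Greenfeld--Tao (a $\{0,1\}$-valued sum of one-periodic pieces), I would choose a limit configuration whose restriction to each coset of $\Lambda$ has one of two forms:
\begin{enumerate}
\item[(a)] invariant under a rank-$(d-1)$ subgroup; or
\item[(b)] of the form $\ind\{\{\alpha\cdot x+\beta\}+\{\alpha'\cdot x+\beta'\}+\dots\}$, a three-term affine fractional-part expression with irrational data and density $1/2$.
\end{enumerate}
Rational directions are absorbed into case (a) after refining $\Lambda$. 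Irrational ones are normalized by an equidistribution argument (Weyl), choosing generic limit points so that no degenerate coincidences occur.

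\emph{Step 3: recoloring.} I substitute into each tiling equation $\ind_{F_i}*u=1$. The contributions from type (b) cosets must cancel modulo the periodic part, and equidistribution forces them to cancel in pairs: the coset $c$ and the coset $c+f$ carry complementary formulas. This pairing defines a finite bipartite graph on the type (b) cosets; it is bipartite because the pairs are complementary, so a complementary cycle must have even length. Two-color the graph and replace the pattern on each type (b) coset by the constant $1$ or $0$ according to its color. Every tiling equation is preserved, because each complementary pair summing to $1$ is replaced by a pair of constants summing to $1$. Type (a) cosets are now the only nonperiodic parts. Iterating the argument along the remaining one-periodic directions, as in Bhattacharya's planar argument but applied coset by coset, replaces them by periodic Boolean patterns.

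I expect the main obstacle to be Step 2: proving that the orbit closure contains a configuration in the clean normal form (a)/(b) with exactly three terms and density $1/2$, and that the pairing in Step 3 is an exact complement relation rather than a more complicated linear relation among several cosets.
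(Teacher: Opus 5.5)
Your outline follows the same architecture as the paper: a bounded decomposition, an orbit-closure normal form, a bipartite two-coloring, and a final full periodization. Step~1 is fine once singleton tiles are disposed of and the decomposition, boundedness included, is taken directly from Theorem~\ref{thm:MSSdecomp} rather than from your sketched factorization. There is, however, a genuine gap: Step~2 is the paper's central result, Theorem~\ref{thm:normal}, and you leave it open without the ideas that make it work.

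The claim that rational directions are absorbed into case (a) is not automatic. On a fine coset a rational-phase component is constant modulo one, yet it may be a nonconstant $\{0,1\}$-valued function $\psi_i$, and then $b_C=K-\psi_i-\sum_j U_j$ is not visibly periodic. The paper selects the limit point using an invariant measure on the orbit closure of the whole tuple $(a,\varphi_1,\ldots,\varphi_r)$. This ensures that $\psi_i(x)=0$ and $\psi_i(y)=1$ with $y-x\in Q\Z^d$ force $b(x)=1$ and $b(y)=0$, so $b=1-\psi_i$ on that coset (Lemma~\ref{lem:stationary}(a)). The same measure yields the rational independence of $1,\alpha_j,\beta_j$, because the event of any fixed relation has pairwise disjoint translates.

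Likewise, three terms and density $1/2$ do not follow from Weyl equidistribution alone. They come from combining the moments $1/2$, $1/3$, $1/4$ with $b_C^2=b_C$ to obtain $\theta(1-\theta)=R/12$, hence $R\le 3$. The cases $R=1,2$ and nonaffine phases are then excluded by differencing along the other kernels.

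In Step~3 your justifications do not go through as stated. The type (b) contributions do not merely cancel modulo the periodic part. By nonnegativity, every summand other than an exceptional summand $u$ has support in $\{u=0\}$. Lemma~\ref{lem:Elevels} (neither level set of an exceptional function contains a nonempty codimension-one periodic set) then forces every codimension-one periodic summand to vanish identically. Averaging leaves exactly two exceptional summands, counted with multiplicity, and they are complementary. This resolves your worry about more complicated relations among several cosets. Without it, replacing the exceptional summands by constants summing to $1$ need not preserve the equation.

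Bipartiteness does not follow from complementarity either. An odd closed walk only yields $u(\cdot+t)=1-u$, which parity patterns satisfy. You must prove that exceptional functions admit no such anti-period (Lemma~\ref{lem:noantiperiod}): averaging over $\ker\ell_1$ gives $U_1(y+t)+U_1(y)=1$, and differencing gives $2\alpha_1\in\Z$.

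Finally, after recoloring you already have a piecewise $(d-1)$-periodic joint co-tile. Periodizing type (a) cosets one at a time breaks the equations, because each equation couples several cosets and a single coset's contribution $\ind_{F_i}*\ind_{A_r}$ need not be fully periodic. For example, in Example~\ref{ex:nonperiodic} with $d=q=2$, the restriction to $2\Z^2$ is forced to equal $1-\ind_{\{y_2=0\}}$ by the restriction $\ind_{\{y_2=0\}}$ on $e_1+2\Z^2$, and neither is fully periodic. The paper closes this step with Theorem~\ref{thm:MSSreplacement}. A direct argument works hyperplane by hyperplane rather than coset by coset: averaging shows that the joint contribution of all pieces sharing a period hyperplane to each equation is fully periodic. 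Each such group can then be periodized by a one-dimensional pigeonhole argument.
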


For $d=2$, independence of a one-tile tuple imposes no restriction beyond normalization, and Theorem~\ref{thm:main} recovers Bhattacharya's periodic existence theorem. For $d=3$, it gives periodic existence for two independent tiles. Thus $d-1$ independent tiles suffice for periodic existence, whereas $d$ independent tiles force every joint co-tile to be fully periodic. Example~\ref{ex:nonperiodic} shows that the distinction between these conclusions is necessary. The theorem concerns joint tilings and does not settle the periodic tiling problem for an arbitrary single tile of $\Z^3$.

For comparison, property $(\star)$ of~\cite[Definition 1.4]{MSS24} requires an independent $(d-1)$-tuple to satisfy
\begin{equation}\label{eq:star}
 \spanq(v_1,\ldots,v_{d-1})=\spanq(w_1,\ldots,w_{d-1})
 \quad\Longrightarrow\quad
 v_i=w_i\quad(1\leq i\leq d-2)
\end{equation}
for all nonzero choices $v_i,w_i\in F_i$. The condition depends on the order of the tuple. Under $(\star)$, Meyerovitch, Sanadhya and Solomon prove that every joint co-tile is a finite disjoint union of sets with rank-$(d-1)$ period subgroups~\cite[Theorem 1.6]{MSS24}, from which their Corollary 1.7 follows. Our theorem removes $(\star)$ from the existence conclusion, rather than asserting this decomposition for every original joint co-tile. Examples~\ref{ex:coordinate} and~\ref{ex:fullrank} separately show that $(\star)$ is not a necessary property of a periodically tiling independent tuple: it fails in every ordering of those tuples, although they have lattice joint co-tiles.

The main theorem follows from a periodic replacement principle that does not assume independence.

\begin{definition}\label{def:periodic}
For $0\leq j\leq d$, a function on $\Z^d$ is \emph{$j$-periodic} if it has a period subgroup of rank $j$. This does not require its full period group to have rank exactly $j$. In particular, \emph{codimension-one periodic} means invariant under some subgroup of rank $d-1$; fully periodic functions, including constants, also satisfy this condition. A set is \emph{$j$-periodic} if its indicator is $j$-periodic. It is \emph{piecewise $(d-1)$-periodic} if it is a finite disjoint union of $(d-1)$-periodic sets, whose period subgroups may differ from one piece to another.
\end{definition}

\begin{theorem}[Periodic replacement from a bounded decomposition]\label{thm:periodize}
Let $d\geq2$, and let $F_1,\ldots,F_k\subset\Z^d$ be nonempty finite sets with a joint co-tile $A$. Suppose that
\begin{equation}\label{eq:decomposition}
 \ind_A=c-\sum_{i=1}^r\varphi_i,
\end{equation}
where $c$ is a fully periodic real function and each $\varphi_i$ is a bounded real function invariant under a subgroup of rank $d-1$. Then the family has a piecewise $(d-1)$-periodic joint co-tile, and consequently a fully periodic joint co-tile.
\end{theorem}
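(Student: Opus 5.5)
We follow the strategy outlined in the abstract. The plan has four stages. First pass to an orbit closure and obtain a normal form. Then show that the non-periodic pieces take a rigid fractional-part shape. Then pair those pieces off through the tiling equations. Finally, replace them by constants using a two-coloring.

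\emph{Stage one: normal form.} The set of joint co-tiles of $F_1,\ldots,F_k$ is a closed, translation-invariant subset of $\{0,1\}^{\Z^d}$. The decomposition \eqref{eq:decomposition} has bounded pieces with fixed period subgroups, so we may take limits along translations. Passing to translates $A+t_n$ and choosing subsequences, the limit is again a joint co-tile $A'$ with $\ind_{A'}=c'-\sum_i\varphi_i'$, where $c'$ is a translate of $c$ and each $\varphi_i'$ keeps the period subgroup $L_i$ of $\varphi_i$. Let $q$ be a scalar period of $c$, and fix a finite-index lattice $\Lambda\leq q\Z^d$ adapted to the $L_i$. We restrict to a coset $x_0+\Lambda$. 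There $c'$ is constant, so $\ind_{A'}$ is a constant minus a sum of $r$ codimension-one periodic functions, and it takes values in $\{0,1\}$. Following Greenfeld and Tao's Boolean normal-form argument, we group the $\varphi_i$ by their period subgroup up to commensurability. We then use the Boolean constraint $u^2=u$ on products of differences along directions in $L_i\setminus L_j$. The aim is to show that, after a further orbit-closure limit, each coset restriction is of one of two kinds:
\begin{enumerate}[label=(\alph*)]
\item it is itself $(d-1)$-periodic; or
\item it equals $\ind\{\{\alpha\cdot x+\beta\}<1/2\}$ or a similar three-term affine fractional-part expression, with irrational slope and density exactly $1/2$.
\end{enumerate}
The second kind arises because a sum of two or three codimension-one periodic pieces can be Boolean only if it comes from a Sturmian-type rotation. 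We make the pieces equidistribute via the limit, and then the Boolean identity forces the arithmetic.

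\emph{Stage two: the graph.} Write the tiling equation $\sum_{f\in F_j}\ind_{A'}(x-f)=1$ on each coset of $\Lambda$. It expresses a sum of restrictions to finitely many cosets. The $(d-1)$-periodic restrictions contribute terms that are periodic in a codimension-one direction. Equidistribution of the fractional-part terms, via Weyl's criterion, forces those terms to cancel in pairs: each type-(b) restriction appearing must be matched with one that is the complementary $1-$ (same formula) on another coset. Join two type-(b) cosets by an edge when a tiling equation pairs them as complements. The complementary relation alternates, which yields a bipartite graph on the finitely many type-(b) cosets.

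\emph{Stage three: constant replacement and conclusion.} Two-color each connected component of this bipartite graph. Replace each type-(b) restriction by the constant $0$ on one color class and $1$ on the other. Every pair $u,1-u$ then becomes $0,1$, so each tiling equation is preserved, while the type-(a) restrictions are left unchanged. The resulting set is a finite union over cosets of $(d-1)$-periodic sets, hence piecewise $(d-1)$-periodic. The final step is to upgrade from piecewise $(d-1)$-periodic to fully periodic. We would do this by the standard Greenfeld--Tao argument: piecewise codimension-one periodic co-tiles can be periodized by treating each piece's fibres as a one-dimensional tiling problem and applying Newman's theorem together with pigeonhole on finitely many configurations.

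The main obstacle is stage one, the normal form. Proving that Boolean sums of codimension-one periodic functions with varying period groups are, after passing to limits, either periodic or exactly of the three-term fractional form requires a careful induction on $r$ and on commensurability classes.
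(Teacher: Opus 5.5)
Your outline has the same architecture as the paper's proof: an orbit-closure normal form, pairing of exceptional residue restrictions through the tiling equations, a two-colouring, and the final upgrade. However, the two places where the real work lies are left open. In Stage one you state only the aim, and the mechanism you gesture at is not the one that works.

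The paper first uses that $\ind_A$ is integer-valued. Reducing the decomposition modulo one and applying $\prod_{j\ne i}\Delta_{q_0v_j}$, with $v_j\in\ker\ell_j$ and $\ell_i(v_j)\ne0$, kills every other component. So on fine scalar cosets each $\varphi_i \bmod 1$ is either constant or a polynomial in $\ell_i$ with irrational leading coefficient. Pairwise joint equidistribution then gives first, second and mixed moments $1/2$, $1/3$, $1/4$. Combined with $b_C^2=b_C$, this yields $\theta(1-\theta)=R/12$ for the number $R$ of irrational phases, so $R\le3$. Reduction and further differences modulo one exclude $R=1,2$ and nonlinear phases. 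Then $R=3$ forces density $1/2$ and $b_C=2-\sum_{j=1}^3\{\alpha_j\ell_j+\beta_j\}$.

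No induction on $r$ or on commensurability classes is needed. Equidistribution is also not something a limit can create: a piece that is constant modulo one on a coset stays so in every limit. The orbit-closure limit serves two purposes you do not mention:
\begin{enumerate}
\item an invariant-measure selection showing that a component which is Boolean but nonconstant on a coset forces $b=1-\psi_i$ there, a case your dichotomy does not address;
\item generic intercepts, i.e.\ $1,\alpha_j,\beta_j$ rationally independent.
\end{enumerate}
Your model $\ind\{\{\alpha\ell(x)+\beta\}<1/2\}$ is invariant under $\ker\ell$, so it is already of your first kind. What actually occurs is the three-phase carry function, e.g.\ $\ind\{\{\alpha x_1+\beta\}+\{\alpha x_2+\gamma\}<1\}$.

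Stage two needs two properties of exceptional functions that you neither state nor prove.

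First, neither level set contains a nonempty codimension-one-periodic subset. Together with nonnegativity of the Boolean summands, this forces every periodic summand in an equation containing an exceptional summand to vanish. Only then does averaging give exactly two exceptional summands, counted over tile points. Your phrase ``Weyl's criterion forces cancellation in pairs'' does not establish this. The property also depends on the generic intercepts: if some $\alpha_j\ell_j(y)+\beta_j$ were an integer, $\{u=1\}$ would contain a whole coset of $\ker\ell_j$, and periodic and exceptional summands could coexist in one equation.

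Second, bipartiteness does not follow from ``the complementary relation alternates''. Composing the edge identities $b_t(y+\delta)=1-b_s(y)$ around an odd closed walk produces a translate of an exceptional $u$ equal to $1-u$. This includes a loop, when both exceptional summands come from the same residue class. General Boolean functions do admit such a relation: the parity of $y_1$ satisfies $u(y+e_1)=1-u(y)$. The paper excludes it by averaging over cosets of $\ker\ell_1$ to get $U_1(y+t)+U_1(y)=1$, hence $2\alpha_1\in\Z$, which contradicts irrationality.

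Finally, your fibrewise Newman sketch for the last step does not handle pieces with different period groups interacting in one tiling equation. The paper simply invokes Theorem~\ref{thm:MSSreplacement}, and you should do the same.
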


Theorem 1.1 of~\cite{MSS24} supplies a decomposition of the form~\eqref{eq:decomposition} for joint co-tiles of $d-1$ independent tiles. Its summands are bounded real functions, and need not be indicators of disjoint sets. Theorem~\ref{thm:periodize} converts this real decomposition into a piecewise-periodic replacement. The final passage to a fully periodic joint co-tile is \cite[Theorem 1.3]{MSS24}.

The normal-form argument underlying Theorem~\ref{thm:periodize} applies to any Boolean configuration $a$ with the decomposition~\eqref{eq:decomposition}, without using a tiling equation. Theorem~\ref{thm:normal} produces a configuration $b$ in the translation orbit closure of $a$ and an integer $q\geq1$. For each residue class $x+q\Z^d$, consider the function $y\mapsto b(x+qy)$ on $\Z^d$. This function is either codimension-one periodic or has the form
\begin{equation}\label{eq:introE}
 2-\sum_{j=1}^3\{\alpha_j\ell_j(y)+\beta_j\},
\end{equation}
where $\{\cdot\}$ denotes fractional part, the $\ell_j:\Z^d\to\Z$ are pairwise nonparallel surjective integer linear forms, and $1,\alpha_j,\beta_j$ are linearly independent over $\Q$ for each $j$. The rational span of the three forms has dimension two. We call a Boolean function with this representation and these parameter conditions \emph{exceptional}. The proof extends the planar Boolean normalization method of Greenfeld and Tao~\cite[Section 8, especially Proposition 8.7]{GT25} to codimension-one period subgroups.

The tiling equations are used only after this normal form has been obtained. Every exceptional function has density one half. Neither of its level sets contains a nonempty codimension-one-periodic subset, and no translate of the function equals its complement. Fix a tile and an output residue class modulo $q$, and express its tiling equation as a sum of translates of the functions $y\mapsto b(x+qy)$. If one summand is exceptional, nonnegativity first forces every codimension-one-periodic summand to vanish identically. Averaging then shows that exactly two exceptional summands remain, counted according to the original tile points rather than as distinct functions.

We construct a graph with one vertex for each residue class on which the restriction is exceptional. Each tiling equation with exceptional summands gives an edge joining the residue classes of its two exceptional summands. The equation identifies a translate of one endpoint function with the complement of the other. An odd closed walk would therefore identify a translate of an exceptional function with its own complement, which is impossible. The graph is bipartite, and its vertices can be colored by $0$ and $1$ so that adjacent vertices receive different colors. Replacing each exceptional restriction by the constant equal to its vertex color, while leaving every codimension-one-periodic restriction unchanged, preserves all the tiling equations. The resulting joint co-tile is piecewise $(d-1)$-periodic, as required in Theorem~\ref{thm:periodize}.

Herva and Kari~\cite[Corollary 19]{HK26} proved that joint co-tile indicators of independent tiles admit decompositions into integer-valued periodic functions. Their theorem does not guarantee that the integer-valued summands are bounded, and therefore does not supply the bounded decomposition used here. Moreover, Example~\ref{ex:carry} shows that even a bounded real codimension-one decomposition of a Boolean configuration need not yield a disjoint periodic partition.

Several consequences are immediate once the simultaneous replacement principle is available.

\begin{corollary}[An independent subfamily suffices]\label{cor:subfamily}
Let $d\geq2$, and let $F_1,\ldots,F_k\subset\Z^d$ be nonempty finite sets. Suppose some $d-1$ of these sets, normalized to contain $0$, form an independent tuple. If the entire family has a joint co-tile, then it has a fully periodic joint co-tile.
\end{corollary}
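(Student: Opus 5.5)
The plan is to reduce Corollary~\ref{cor:subfamily} directly to Theorem~\ref{thm:periodize}, which is stated for an arbitrary finite family and uses no independence. Let $A$ be a joint co-tile of the whole family $F_1,\ldots,F_k$. After relabeling, the sets $F_1,\ldots,F_{d-1}$, normalized to contain $0$, form an independent tuple. Since translating a tile does not change its co-tiles, we may replace each $F_i$ by its normalized version without changing the set of joint co-tiles of the family. Then $A$ is in particular a joint co-tile of the independent subfamily $(F_1,\ldots,F_{d-1})$.

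Next we apply \cite[Theorem 1.1]{MSS24} to this independent $(d-1)$-tuple and the co-tile $A$. As recalled after Theorem~\ref{thm:periodize}, it gives a decomposition
\[
 \ind_A=c-\sum_{i=1}^r\varphi_i
\]
of the form \eqref{eq:decomposition}. Here $c$ is fully periodic, and each $\varphi_i$ is bounded and invariant under a subgroup of rank $d-1$. This decomposition is a property of the function $\ind_A$ alone; it does not depend on which tiling equations are later imposed.

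Now we invoke Theorem~\ref{thm:periodize} for the full family $F_1,\ldots,F_k$ with the same joint co-tile $A$ and the same decomposition. Its hypotheses concern only the family admitting $A$ as a joint co-tile and $\ind_A$ having the form \eqref{eq:decomposition}. Both hold, so it yields a piecewise $(d-1)$-periodic joint co-tile of all $k$ tiles, and then a fully periodic one. Undoing the normalizing translations does not change co-tiles, so this periodic set is a joint co-tile of the original family.

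The only point needing care is to confirm that \cite[Theorem 1.1]{MSS24} applies to any joint co-tile of the independent subfamily, not just to co-tiles of that subfamily alone. This is clear, since $A$ satisfies exactly the subfamily's equations, and the remaining equations enter only through Theorem~\ref{thm:periodize}. I expect no genuine obstacle: the substantive work is already contained in Theorem~\ref{thm:periodize}, whose bipartite two-coloring preserves every tiling equation of the family simultaneously.
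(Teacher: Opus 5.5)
Your route is the paper's: apply the Meyerovitch--Sanadhya--Solomon decomposition (Theorem~\ref{thm:MSSdecomp}) to the independent $(d-1)$-subfamily to put $\ind_A$ in the form \eqref{eq:decomposition}, then apply Theorem~\ref{thm:periodize} to the whole family. The normalization step and the remark that the decomposition depends only on $\ind_A$ are both fine.

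There is one small gap: you never check the hypothesis of Theorem~\ref{thm:MSSdecomp} that the tiles be \emph{nontrivial}. Independence is vacuous for a tile equal to $\{0\}$, since there is no nonzero vector to choose from it. So the corollary's hypotheses do not exclude singletons, and your citation is not justified in that case.

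The paper handles this case first, and the fix is immediate. A singleton tile forces $A=\Z^d$. The remaining tiling equations then force every tile to be a singleton, and $\Z^d$ is already a fully periodic joint co-tile. Equivalently, $\ind_A\equiv1$ has the form \eqref{eq:decomposition} with no components $\varphi_i$, so Theorem~\ref{thm:periodize} applies directly.

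The point you flagged as ``needing care'' is harmless. The hypothesis that actually needed checking was nontriviality. With the singleton case added, your proof is complete.
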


\begin{corollary}[Independent companions]\label{cor:companions}
Let $d\geq2$ and $\{0\}\subsetneqq F\subset\Z^d$ be finite. Then $F$ admits a fully periodic co-tile if and only if there are finite sets $G_1,\ldots,G_{d-2}$ containing $0$ and a set $A\subset\Z^d$ such that $(G_1,\ldots,G_{d-2},F)$ is independent and
\[
 F\oplus A=G_1\oplus A=\cdots=G_{d-2}\oplus A=\Z^d.
\]
For $d=2$ the list of companions is empty.
\end{corollary}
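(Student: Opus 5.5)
The plan is to prove the two implications separately, using Corollary~\ref{cor:subfamily} for the harder direction ($\Leftarrow$) and an explicit construction of lattice companions for the direction ($\Rightarrow$).

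\emph{($\Leftarrow$).} Suppose $G_1,\ldots,G_{d-2}$ and $A$ are as in the statement. Then $(G_1,\ldots,G_{d-2},F)$ is an independent $(d-1)$-tuple and $A$ is a joint co-tile of it. Theorem~\ref{thm:main}, or equally Corollary~\ref{cor:subfamily} with $k=d-1$, supplies a fully periodic joint co-tile $A'$. In particular $F\oplus A'=\Z^d$, so $F$ has a fully periodic co-tile. For $d=2$ the tuple is just $(F)$, and Bhattacharya's theorem is the special case recovered by Theorem~\ref{thm:main}.

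\emph{($\Rightarrow$).} Let $A$ be a fully periodic co-tile of $F$, invariant under $q\Z^d$. I would look for companions of the simplest shape, namely segments $G_i=\{0,u_i,2u_i,\ldots,(m_i-1)u_i\}$ for suitable vectors $u_i$ and lengths $m_i$. Independence only requires that $u_1,\ldots,u_{d-2}$ together with any nonzero $v\in F$ be linearly independent. The difficulty is that $G_i$ must tile with the \emph{same} $A$, and a general fully periodic $A$ need not be co-tiled by a segment. So the key step is to replace $A$ by a better co-tile first.

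The natural way to do this is the standard reduction of periodic tilings to tilings of a finite quotient. Write $L=q\Z^d$ and $N=|F|$, so that $A$ is a union of $|\Z^d/L|/N$ cosets of $L$. I want $A$ to become a union of cosets of a sublattice $\Lambda\supseteq L$ together with a structure along a line. Concretely, I would aim for a lattice $\Lambda=A$ of index $N$; for such a lattice every segment $\{0,u,\ldots,(N-1)u\}$ with $u$ of order $N$ in $\Z^d/\Lambda$ tiles with $\Lambda$. A nontrivial $F$ need not have a lattice co-tile, however. The fallback is to work inside $\Z^d/L$: the set $A$ is a union of $L$-cosets, and $\{0,u,\ldots,(M-1)u\}$ tiles with $A$ whenever $u$ generates a subgroup $H$ of order $M$ and $A$ is a complete set of coset representatives of $H$ in $\Z^d/L$, provided $M=N$.

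I would therefore try to choose the period lattice $L$ and $u$ so that $A$ mod $L$ is a transversal of a cyclic subgroup of order $N$. For example, one can enlarge $L$ to $L'=L+\Z Nu$ for a vector $u$ with $Nu$ chosen appropriately. This works if $A$ is invariant under $Nu$ and its $u$-translates $A,A+u,\ldots,A+(N-1)u$ are disjoint. Such a $u$ can be found because $F\oplus A=\Z^d$ means the translates $A+f$, $f\in F$, partition $\Z^d$. Choosing $u=qe$ and then perturbing by multiples of $q$ is not enough, so I would instead pass to the tiling $F\oplus A$ and pick a generic direction $u$ with $u\notin\spanq(v)$ for all $v\in F$. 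I would then modify $A$ using the freedom to replace the $q\Z^d$-periodic tiling by a $(q\Z^{d-1}\times Nq\Z)$-type ``sheared'' version. The sheared version is built as a product of the quotient tiling with the segment.

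The main obstacle is exactly this step: producing a single co-tile shared by $F$ and $d-2$ independent segments. For $d\geq 3$ I expect to need $d-2$ distinct generic directions $u_1,\ldots,u_{d-2}$. The first thing I would try is to take $A$ to be a lattice after lifting to a higher-index tiling. If that fails, I would realize all $G_i$ as segments along generic directions $u_i\in L$-translates chosen so that $A+ju_i$ permutes the pieces $A+f$.
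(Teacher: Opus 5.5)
Your ($\Leftarrow$) direction is correct and is the paper's argument: Theorem~\ref{thm:main} applied to the independent tuple $(G_1,\ldots,G_{d-2},F)$.

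\textbf{The gap is in ($\Rightarrow$), which is never proved.} The proposal ends with a list of things you ``would try'', and the segment ansatz you commit to cannot work with the given $A$ in general. For instance, in $\Z^3$ take $F=\{0,e_1,4e_1,5e_1\}$ and $A=\{x: x_1\bmod 8\in\{0,2\}\}$. Since $A$ depends only on $x_1$ and has density $1/4$, a segment $\{0,u,2u,3u\}$ tiles with $A$ only if the eight sums $ja+2\epsilon$ are distinct modulo $8$, where $a=u_1$, $0\le j\le 3$ and $\epsilon\in\{0,1\}$. But one of $a,2a$ is always congruent to $0$ or $\pm2$ modulo $8$, so this fails for every $u$. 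Your fallback is to replace $A$ by an unspecified ``sheared'' co-tile shared with $d-2$ segments. That co-tile is never constructed, and nothing in the proposal shows it exists.

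\textbf{The missing idea: you need neither segments nor a new co-tile.} Let $L$ be a finite-index period lattice of $A$.
\begin{enumerate}
\item Distinct points of $F$ lie in distinct cosets of $L$; otherwise two translates of the nonempty set $A$ would coincide.
\item If $g\in f+L$, then $g+A=f+A$.
\item Hence any $G=\{g_f:f\in F\}$ with $g_0=0$ and $g_f\in f+L$ satisfies $G\oplus A=\Z^d$ with the \emph{same} $A$.
\end{enumerate}

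Independence then becomes a genericity condition. Suppose $t<d$ tiles already form an independent tuple. The spans of one nonzero vector from each tile are finitely many proper rational subspaces $V$. Write $f+L=f+B\Z^d$ with $B$ invertible, and pick nonzero forms $\lambda_V$ vanishing on each $V$. The product of the $\lambda_V(f+Bz)$ is a nonzero polynomial in $z$, so it does not vanish on all of $\Z^d$. Thus the coset $f+L$ is not covered by these subspaces, and each $g_f$ can be chosen outside them.

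Iterating $d-2$ times yields $(F,G_1,\ldots,G_{d-2})$ independent with common co-tile $A$. Independence does not depend on the order of the tuple, so this is what the corollary requires. For $d=2$ the tuple $(F)$ is trivially independent. This is the paper's Lemma~\ref{lem:completion}.

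Your last sentence, about translates permuting the pieces $A+f$, points in the right direction. The fix is to take the points of each $G_i$ to be $L$-translates of the points of $F$, rather than multiples of a single vector.
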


The forward implication is contained in \cite[Theorem 1.9]{MSS24}; we recall a short proof in Section~\ref{sec:consequences}. The reverse implication removes $(\star)$ from the characterization in their Corollary 1.10. In particular, a nontrivial finite tile of $\Z^3$ has a periodic co-tile exactly when it has an independent companion with which it shares some co-tile.

\begin{corollary}[Uniform decidability]\label{cor:decision}
There is an algorithm that, given $d\geq2$ and a finite family of nonempty finite subsets of $\Z^d$ containing an independent $(d-1)$-subfamily, decides whether the family has a joint co-tile. On a positive instance it produces a scalar period and a fundamental pattern for a periodic joint co-tile. On a negative instance it produces a finite unsatisfiable collection of the tiling equations. The dimension may be part of the input.
\end{corollary}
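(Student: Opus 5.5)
The plan is to run two semi-decision procedures in parallel and to show, using Corollary~\ref{cor:subfamily} and compactness, that exactly one of them halts. The independence hypothesis is used only to guarantee termination on positive instances. First, the algorithm has to locate the independent subfamily. For each $(d-1)$-subset of the family and each choice of normalizing point in each chosen tile, it checks linear independence over $\Q$ of every selection of one nonzero vector per tile. This is a finite computation by rational Gaussian elimination, polynomial in $d$ and the input size, so the dimension may be part of the input. By hypothesis some choice passes. This step only certifies that the instance is admissible; the search procedures below do not otherwise depend on which subfamily is found.

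\emph{Positive search.} For $q=1,2,3,\ldots$ enumerate all subsets $P$ of the box $\{0,\ldots,q-1\}^d$. For each $P$ let $A=P+q\Z^d$, and check whether $\ind_{F_i}*\ind_A=1$ for all $i$. Since both sides are $q\Z^d$-periodic, it suffices to check this on the finite box, reducing points modulo $q$. The search halts exactly when a fully periodic joint co-tile exists, and it then outputs $q$ and the fundamental pattern $P$. By Corollary~\ref{cor:subfamily}, if the family has any joint co-tile then it has a fully periodic one, so this search halts on every positive instance.

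\emph{Negative search.} For $n=1,2,\ldots$ consider the finite constraint system in Boolean variables $a(x)$, $\|x\|_\infty\le n+R$. Here $R$ bounds the norms of all tile points. The system consists of the equations $\sum_{f\in F_i}a(y-f)=1$ for $\|y\|_\infty\le n$ and all $i$. Decide satisfiability by brute force, and halt with this collection of equations once it is unsatisfiable. If every finite system is satisfiable, a standard compactness argument produces a global joint co-tile. The solution sets form a nonincreasing sequence of nonempty closed subsets of the compact space $\{0,1\}^{\Z^d}$, so their intersection is nonempty. Consequently, on a negative instance some $n$ gives an unsatisfiable finite collection, and the negative search halts.

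The two searches cannot both halt, since a periodic joint co-tile satisfies every finite system. Running them alternately therefore yields the claimed algorithm with both certificates. I expect the only substantive point to be the termination of the positive search, which rests entirely on Corollary~\ref{cor:subfamily}, itself derived from Theorems~\ref{thm:main} and~\ref{thm:periodize}. Everything else is routine. One caveat is that no complexity bound follows, since the bound on $q$ is not effective from the argument given.
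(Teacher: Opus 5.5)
Your proposal is correct and is essentially the paper's proof: a parallel search for a $q\Z^d$-periodic solution, which terminates on positive instances by Corollary~\ref{cor:subfamily} (this already covers the singleton case that the paper handles separately), and a search for an unsatisfiable finite box system, which terminates on negative instances by compactness. One side remark is inaccurate but harmless: your admissibility check performs up to $\prod_i(|F_i|-1)$ rank computations per choice of subfamily and normalizing points, so it is not polynomial in $d$ and the input size; the claim that the dimension may be part of the input needs only finiteness and uniformity in $d$.
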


The algorithm uses the standard parallel search for a periodic solution and a finite obstruction; compare \cite[Section 8.2]{MSS24}. Corollary~\ref{cor:subfamily} guarantees termination on positive instances; compactness guarantees termination on negative ones. Neither a decomposition nor an infinite co-tile is needed as input. We give no complexity bound.

For the rest of this paper, Section~\ref{sec:prelim} records the two inputs from~\cite{MSS24} and the averaging conventions,  Section~\ref{sec:normal} proves the orbit-closure normal form, and Section~\ref{sec:graph} proves periodic replacement and the main theorem. The consequences and examples occupy Sections~\ref{sec:consequences} and~\ref{sec:examples}. 

\section{Preliminaries and the joint-tiling inputs}\label{sec:prelim}
We use the translation convention $T^h u(x)=u(x+h)$ and write $\Delta_h=T^h-I$. For a finitely supported function $f$,
\[
 (f*u)(x)=\sum_{v\in\Z^d}f(v)u(x-v).
\]
For a function $u$, its support is $\supp u=\{x:u(x)\ne0\}$; for a Boolean configuration this is the level set $\{u=1\}$. The translation orbit closure of a Boolean configuration is taken in the product topology on $\{0,1\}^{\Z^d}$. Since $\Z^d$ is countable, this is a compact metrizable space. For a subgroup $\Gamma$ we write
\[
 \overline{\orb_\Gamma(u)}
 =\overline{\{T^h u:h\in\Gamma\}}.
\]
For a tuple of functions, this notation refers to simultaneous translation of all its coordinates by the same vector. The space of joint co-tile indicators of a fixed finite family is closed and translation invariant: each equation depends on finitely many coordinates. Consequently every point in the orbit closure of a joint co-tile indicator is again a joint co-tile indicator.

For $q\geq1$ and $x\in\Z^d$, the \emph{residue restriction} of $u$ to $x+q\Z^d$ means the function
\[
 u_{x,q}(y)=u(x+qy),\qquad y\in\Z^d.
\]
Thus a residue restriction is a function on a new copy of $\Z^d$, not a function on the finite quotient $(\Z/q\Z)^d$. A period subgroup $L$ of $u_{x,q}$ gives the period subgroup $qL$ for the restriction of $u$ on the original coset. Choosing another representative $x+qz$ of the same coset replaces $u_{x,q}(y)$ by $u_{x,q}(y+z)$.

We use the following two theorems with their published hypotheses.

\begin{theorem}[Meyerovitch--Sanadhya--Solomon]\label{thm:MSSdecomp}
Let $1\leq k\leq d$, and let $(F_1,\ldots,F_k)$ be an independent tuple of nontrivial finite subsets of $\Z^d$. Every joint co-tile indicator is an integer constant plus a finite sum of bounded real functions, each invariant under a subgroup of rank $k$.
\end{theorem}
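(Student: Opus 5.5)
This is Theorem 1.1 of \cite{MSS24}, stated here with its published hypotheses, so within this paper it is imported rather than reproved. Still, I would reconstruct it by the dilation and annihilation method. The plan is to produce, for each $i$, a finitely supported polynomial that annihilates $\ind_A$ up to constants, and then to factor these polynomials into directional differences.

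First, fix a prime $p$ larger than all the relevant coefficients. The dilation lemma gives $\ind_{F_i}*\ind_A=1$ and hence $\ind_{p^nF_i}*\ind_A=1$ for all $n$ coprime to a suitable modulus; this is proved by expanding $(\ind_{F_i})^p\equiv\ind_{pF_i}\pmod p$ and using boundedness. Subtracting two such identities, we get $(\delta_0-\ind_{F_i})$-type relations. More concretely,
\[
 g_i:=\ind_{p^{n}F_i}-\ind_{p^{m}F_i}
\]
annihilates $\ind_A$, and so does
\[
 \prod_{v\in F_i\setminus\{0\}}(\delta_0-\delta_{p^{n}v})
\]
up to combinations. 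Following Greenfeld--Tao, one shows that
\[
 \Bigl(\prod_{i}\prod_{v\in F_i\setminus\{0\}}\Delta_{r_{i,v}v}\Bigr)\ind_A=0
\]
for suitable nonzero integers $r_{i,v}$, where each factor is indexed by a choice of one nonzero vector from each tile. In other words, $\ind_A$ is killed by a product over the $k$ tiles of directional difference operators, with one direction per tile.

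Second, I would solve this difference equation. Independence means every choice $(v_1,\ldots,v_k)$ spans a rank-$k$ lattice $L_{v}$. I would use the structure theorem for bounded solutions of $\prod_j\Delta_{w_j}u=0$, where the directions $w_j$ are grouped into tuples. The claim is that, after expanding via inclusion--exclusion on the polynomial ideal, $u$ is a finite sum of bounded functions, each annihilated by $\Delta_{r_1v_1},\ldots,\Delta_{r_kv_k}$ for some tuple. Such a function is then invariant under the rank-$k$ group $\langle r_jv_j\rangle$. The constant term collects the fully invariant part; it is an integer by the density of $A$, which equals $1/|F_i|$, after subtracting the averages of the other summands.

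The main obstacle is the boundedness of the summands. Algebraic decompositions of solutions of difference equations generically produce unbounded (polynomially growing) pieces. Controlling them requires the ergodic/Host--Kra style argument in \cite{MSS24}, namely a decomposition via conditional expectations onto invariant factors in the orbit closure, with boundedness inherited from $0\le\ind_A\le1$. I would try first to realize each summand as a limit of averages of $\ind_A$ along the lattices $L_v$, which are automatically bounded, and then verify that the remainder vanishes using the annihilation identity.
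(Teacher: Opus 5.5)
You are right that the paper does not reprove this statement: it imports it from \cite[Theorem 1.1]{MSS24}, and the citation is all the paper uses. Your reconstruction, however, has a genuine gap exactly at the step you call the ``main obstacle'', and the annihilator route cannot close it.

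\begin{itemize}
\item Your displayed product over all tiles and all nonzero vectors is a multiple of the single-tile factor $\prod_{v\in F_1\setminus\{0\}}\Delta_{r_{1,v}v}$. The kernel of that factor already contains every $r_{1,v}v$-periodic function, so no rank-$k$ conclusion can come from it. The tiling equations actually give $k$ separate annihilators, one per tile.
\item Your rephrasing ``one direction per tile'' is false already for $k=1$. Take $F=\{0,2e_1,2e_2,2e_1+2e_2\}\subset\Z^2$, and set $A\cap2\Z^2=2B$ and $A\cap((1,0)+2\Z^2)=(1,0)+2B'$, with anything admissible on the other two cosets. Here $B$ and $B'$ are co-tiles of the unit square made of aperiodically shifted rows and columns, respectively. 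Then no $\Delta_{rv}$ with $v\in F\setminus\{0\}$ and $r\neq0$ annihilates $\ind_A$.
\item Even the common kernel of the $k$ annihilators is not spanned by functions invariant under lattices $\langle r_1v_1,\dots,r_kv_k\rangle$. For the tiles of Example~\ref{ex:coordinate} it contains $x\mapsto x_1$.
\item Where algebraic decompositions do exist, they come without bounds. This is precisely the Herva--Kari situation the paper points out.
\item Powers $p^n$ are too sparse a set of dilations for averaging to produce periodicity.
\item The density $1/|F_i|$ says nothing about integrality of the constant, since the summands have arbitrary real averages. Integrality is trivial anyway: move the fractional part of the constant into a summand.
\end{itemize}

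The missing idea is to average the dilated tiling equations themselves, which makes boundedness free. For a prime $p>|F_i|$, Frobenius gives $\ind_{pF_i}*\ind_A\equiv\ind_{F_i}^{*p}*\ind_A=|F_i|^{p-1}\equiv1\pmod p$, while $0\leq\ind_{pF_i}*\ind_A\leq|F_i|<p$. Iterating over the prime factors of $r$, all of which exceed $\max_i|F_i|$ when $r\equiv1\pmod N$ with $N=(\max_i|F_i|)!$, gives $rF_i\oplus A=\Z^d$ for every such positive $r$. Apply a Banach limit $m$ in $n\geq0$ to $\sum_{h\in F_1}\ind_A(x-(1+Nn)h)=1$, and use $0\in F_1$. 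This gives
\[
 \ind_A=1-\sum_{h\in F_1\setminus\{0\}}\varphi_h,\qquad
 \varphi_h(x)=m\bigl(n\mapsto\ind_A(x-(1+Nn)h)\bigr)\in[0,1],
\]
and shift invariance of $m$ makes $\varphi_h$ invariant under $Nh$. Each $\varphi_h$ is an average of translates of $\ind_A$, so it satisfies the dilated equations of $F_2,\dots,F_k$.

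Repeating this tile by tile gives $\ind_A=c+(-1)^k\sum\varphi_{h_1\cdots h_k}$ with $c\in\Z$ and $h_i\in F_i\setminus\{0\}$. Each summand is $[0,1]$-valued and invariant under $\langle Nh_1,\dots,Nh_k\rangle$, which has rank $k$ by independence.

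Your closing idea points in this direction: these summands are indeed iterated averages of $\ind_A$ along cosets of such lattices. But the identity expressing $\ind_A$ through them comes from averaging the tiling equations, with the offsets $h_i$ built into $r=1+Nn$, not from an annihilation identity. No Host--Kra machinery is needed.
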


This is the portion of \cite[Theorem 1.1]{MSS24} that we need. The refined decomposition in their Theorem 3.1 specifies the period subgroups. When $k=d-1$, changing the signs of the summands gives~\eqref{eq:decomposition}. When $k=d$, the finite sum is fully periodic.

\begin{theorem}[Meyerovitch--Sanadhya--Solomon]\label{thm:MSSreplacement}
Let $F_1,\ldots,F_k\subset\Z^d$ be nonempty finite sets. If they have a piecewise $(d-1)$-periodic joint co-tile, then they have a fully periodic joint co-tile.
\end{theorem}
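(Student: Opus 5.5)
This statement is quoted from \cite[Theorem 1.3]{MSS24}, and in the paper it is an imported input rather than something proved afresh. Still, here is how I would reprove it. Let $A=\bigsqcup_{m=1}^{M}A_m$ be a piecewise $(d-1)$-periodic joint co-tile, with each $A_m$ invariant under a rank-$(d-1)$ subgroup $L_m$. For each $m$, choose a primitive integer linear form $\ell_m$ vanishing on $L_m$. Then, up to passing to a finite-index subgroup of $L_m$, the set $A_m$ is described by a finite-valued pattern: $\ind_{A_m}(x)$ depends only on $x \bmod L_m$. Equivalently, it depends on a residue of $x$ modulo a finite lattice together with the integer $\ell_m(x)$.

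The plan is a compactness and pigeonhole argument along each direction. For a fixed class of $L_m$, the set $A_m$ is a sequence indexed by $\ell_m(x)\in\Z$ taking values in a finite alphabet, namely the patterns on a fundamental domain of the finite quotient. Fix $R$ larger than the diameter of every $F_i-F_i$. I would group together the pieces sharing the same hyperplane direction, so that each direction has its own "slab coordinate". The tiling equations are local. Near any point, the constraint $\ind_{F_i}*\ind_A=1$ involves each piece only through a window of width $R$ in its slab coordinate.

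\textbf{Key step.} I would try to periodize one direction at a time. Fix a direction $\ell$, and treat the combined configuration of all the other pieces as a background. The background is invariant under $\ker\ell$ intersected with the other period lattices; this intersection has rank at least $d-2$, but possibly less, which is the obstacle. The strategy is instead to do it simultaneously. Pick a large integer $N$ and look at the finite vector of $R$-windows of all pieces at "heights" $\ell_m(x)$. Since there are finitely many window-states, pigeonhole finds two heights $s<t$ in each direction with equal state. The piece is then replaced by the periodic repetition of the stretch between $s$ and $t$. This is the one-dimensional trick of Newman/de Bruijn: a sequence satisfying a local sliding-block constraint contains a repeated state, and cutting and repeating produces a periodic solution.

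\textbf{Main obstacle.} The difficulty is that different directions interact. Making $A_m$ periodic in $\ell_m$ changes which points are covered along other directions. So the repetition must be chosen coherently, for instance by choosing the cut points $s,t$ so that the periodized set agrees with the original on the relevant translates. I would handle this via a joint recurrence argument. Consider the $\Z^d$-action on the compact orbit closure of the tuple $(\ind_{A_1},\dots,\ind_{A_M})$. Each coordinate factors through the $\Z$-action by $\ell_m$, so the system is a factor of a product of one-dimensional subshifts. Find a vector $h$ that returns all $M$ coordinates simultaneously to within an $R$-window. This is possible by applying multiple recurrence to the commuting shifts, or by a simple pigeonhole on a finite-index sublattice. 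Then periodize every piece using $h$-compatible cuts. The covering equations are checked locally, window by window. Each local window in the new configuration agrees with some window of an original translate, so the new configuration is still a joint co-tile, and it is invariant under $q\Z^d$ for a suitable $q$. The step I expect to be hardest, and would try first, is making this "every local window is an original window" claim rigorous when the cut hyperplanes of different pieces cross.
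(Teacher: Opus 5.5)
The paper does not prove this statement; it imports it from \cite[Theorem 1.3]{MSS24}. Your argument therefore has to stand on its own. It has a genuine gap at exactly the step you flag, and that step is not merely hard to make rigorous: as formulated, it fails in general.

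After cut-and-repeat with cut points $s_m$ and lengths $P_m=|\ell_m(h)|$, the new configuration near $x$ shows the windows of the $m$th slab sequence at the reduced positions $\tau_m(\ell_m(x))\in[s_m,s_m+P_m)$, where $\tau_m(n)\equiv n \pmod{P_m}$. This is guaranteed to be an original local pattern only if the tuple $(\tau_m(\ell_m(x)))_m$ is the tuple of slab coordinates of a single point in the right residue class. When the forms $\ell_m$ are linearly independent, this can be arranged. In that case no common return vector is needed: cut each coordinate separately with suitably divisible lengths.

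Distinct period hyperplanes, however, generally give dependent forms, for instance whenever $M>d$, or for $\ell_1=x_1$, $\ell_2=x_2$, $\ell_3=x_1+x_2$ in $\Z^2$. In that example you would need $\tau_3(x_1+x_2)=\tau_1(x_1)+\tau_2(x_2)$ for all $x$. Differencing in $x_1$ and in $x_2$ shows that each bounded $\tau_m$ would have to be constant, so $|\ell_1(h)|=|\ell_2(h)|=|\ell_3(h)|=1$. This is incompatible with $\ell_3(h)=\ell_1(h)+\ell_2(h)$. So the periodized pieces produce combinations of windows that need not occur anywhere in $A$, and nothing controls the tiling equations there. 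The trouble comes from the block indices $k_m(x)$ being different for different $m$, not from the choice of $h$, so no recurrence argument repairs it.

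The missing idea is to decouple the pieces before periodizing, so that no interaction remains.
\begin{enumerate}[label=\textup{(\arabic*)}]
\item Merge pieces with the same saturated hyperplane. Then $A=\bigsqcup_{m=1}^M A_m$ with $A_m$ invariant under $QH_m$, where the hyperplanes $H_m=\ker\ell_m$ are pairwise distinct.
\item Put $\Psi_{i,m}=\ind_{F_i}*\ind_{A_m}$. These functions are bounded, $\Psi_{i,m}$ is $QH_m$-invariant, and $\sum_m\Psi_{i,m}=1$.
\item For $j\ne m$, choose $v_j\in H_j$ with $\ell_m(v_j)\ne0$. The operator $\prod_{j\ne m}\Delta_{Qv_j}$ annihilates the constant $1$ and every $\Psi_{i,j}$ with $j\ne m$. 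Writing $\Psi_{i,m}(x+Qy)=u_x(\ell_m(y))$, you get $\prod_{j\ne m}\Delta_{\ell_m(v_j)}u_x=0$.
\item This is the computation in the proof of Lemma~\ref{lem:polynomial}, carried out in $\R$ rather than modulo one. It makes $u_x$ a polynomial on arithmetic progressions, and boundedness then makes it periodic. Hence every $\Psi_{i,m}$ is fully periodic.
\item Each $A_m$ is now a $QH_m$-invariant solution of the one-dimensional system $\ind_{F_i}*\ind_{A_m}=\Psi_{i,m}$ ($1\le i\le k$), whose right-hand sides are periodic. Apply your Newman-type pigeonhole with window states recording all residues modulo $Q$, and with cut points congruent modulo a period of the right-hand sides and farther apart than the window width. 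This gives a fully periodic $QH_m$-invariant $A_m'$ solving the same system, with no interaction between different pieces.
\item Finally, $a'=\sum_m\ind_{A'_m}$ is nonnegative, integer-valued, and satisfies $\ind_{F_i}*a'=1$, so $a'\le1$ pointwise. Disjointness of the new pieces is therefore automatic, and $a'$ is the indicator of a fully periodic joint co-tile.
\end{enumerate}
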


This is \cite[Theorem 1.3]{MSS24}. It requires neither independence nor property $(\star)$.

\subsection{Period hyperplanes and invariant averages}
An integer linear form $\ell:\Z^d\to\Z$ is \emph{primitive} if it is surjective. Equivalently, the greatest common divisor of its integer coefficients is one. We identify such forms with their rational linear extensions to $\Q^d$. Two nonzero forms are \emph{parallel} if one is a rational multiple of the other, or equivalently if their kernels in $\Z^d$ coincide. The rational rank of a collection of forms is the dimension of their span in the dual space $(\Q^d)^*$.

If $L\leq\Z^d$ has rank $d-1$, its saturation
\[
 H=(\spanq L)\cap\Z^d
\]
is the kernel of a surjective integer linear form $\ell:\Z^d\to\Z$. The index $[H:L]$ is finite. Given finitely many such subgroups, a common scalar multiple of each saturation lies in the respective original subgroup. We combine components having the same rational period hyperplane. Thus, in~\eqref{eq:decomposition}, we can assume
\begin{equation}\label{eq:hyperplanes}
 H_i=\ker\ell_i,\qquad
 \ell_i(\Z^d)=\Z,\qquad
 H_i\ne H_j\ (i\ne j),
\end{equation}
with $\varphi_i$ invariant under $q_0H_i$ for a common $q_0\geq1$. We also choose $q_0$ so that $c$ is $q_0\Z^d$-periodic. Distinctness implies that $H_i+H_j$ has finite index in $\Z^d$. It also implies that $H_j$ contains a vector on which $\ell_i$ is nonzero. These two facts supply the transversality needed below.

For a subgroup $L$, let $\pi_L$ be an invariant averaging operator on bounded functions: choose a translation-invariant mean $m_L$ on $\ell^\infty(L)$ and set
\[
 (\pi_Lu)(x)=m_L\bigl(h\mapsto u(x+h)\bigr).
\]
Such a mean can be obtained from a generalized limit of averages over F\o lner boxes in $L$. The operator is linear and positive, preserves constants, commutes with all translations, fixes $L$-periodic functions, and has $L$-periodic range. If $u$ is also $L'$-periodic, then $\pi_Lu$ is $(L+L')$-periodic. No countable additivity of this averaging operator is used; the Borel probability measure used later is a separate construction.

\begin{lemma}[Range normalization]\label{lem:range}
Suppose a Boolean configuration $a$ has a decomposition~\eqref{eq:decomposition} with the periods~\eqref{eq:hyperplanes}. Without changing $a$ or the hyperplanes, one can arrange $0\leq\varphi_i\leq1$ for every $i$, after changing $c$ by a fully periodic real function and increasing $q_0$.
\end{lemma}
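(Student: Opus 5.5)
The idea is to shift each $\varphi_i$ by a constant and rescale, but a rescaling would destroy the identity $\ind_A=c-\sum\varphi_i$. So instead of rescaling I will peel off the part of each $\varphi_i$ that is determined by the other terms, using the averages $\pi_{q_0H_i}$. The naive fix $\varphi_i\mapsto\varphi_i-\inf\varphi_i$ only changes $c$ by a constant. It gives $\varphi_i\ge0$ but no upper bound $1$, so a genuine argument is needed.

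\textbf{Step 1: the cross terms are fully periodic.} Fix $i$ and apply $\pi_i:=\pi_{q_0H_j}$ for $j\neq i$. Since $\varphi_i$ is $q_0H_i$-invariant, $\pi_{q_0H_j}\varphi_i$ is $(q_0H_i+q_0H_j)$-invariant. By transversality this is a finite-index subgroup, so the result is fully periodic. Hence for each $i$ the function
\[
\psi_i:=\pi_{q_0H_i}\Bigl(c-\ind_A-\sum_{j\neq i}\varphi_j\Bigr)
\]
differs from $\varphi_i=\pi_{q_0H_i}\varphi_i$ only by fully periodic terms.

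\textbf{Step 2: reduce to one direction at a time.} Write $\varphi_i=\pi_{q_0H_i}(c-\ind_A)-\sum_{j\ne i}\pi_{q_0H_i}\varphi_j$. The sum is fully periodic, and $\pi_{q_0H_i}c$ is fully periodic as well. Therefore
\[
\varphi_i=-\pi_{q_0H_i}\ind_A+p_i,
\]
with $p_i$ fully periodic. Now $\pi_{q_0H_i}\ind_A$ takes values in $[0,1]$ by positivity and preservation of constants. Replace $\varphi_i$ by
\[
\tilde\varphi_i:=1-\pi_{q_0H_i}\ind_A,
\]
which lies in $[0,1]$ and is $q_0H_i$-invariant. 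The difference $\tilde\varphi_i-\varphi_i=1-p_i$ is fully periodic. Set
\[
\tilde c:=c+\sum_i(\tilde\varphi_i-\varphi_i).
\]
This is fully periodic, and $\ind_A=\tilde c-\sum\tilde\varphi_i$ still holds. The hyperplanes are unchanged.

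\textbf{Step 3: bookkeeping.} Choose $q_0$ larger so that it is a common scalar period of $\tilde c$, which is a finite sum of fully periodic functions. Each $\tilde\varphi_i$ stays $q_0H_i$-invariant after this enlargement, since $q_0H_i$ only shrinks.

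The main obstacle is justifying Step 1. Applying $\pi_{q_0H_i}$ to a function invariant under $q_0H_j$ gives invariance under $q_0H_i+q_0H_j$. This uses the stated property that $\pi_L$ of an $L'$-periodic function is $(L+L')$-periodic. Note that $q_0H_i+q_0H_j$ has finite index because $H_i+H_j$ does. With that in hand, everything else is linearity of $\pi$.
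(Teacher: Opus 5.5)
Your proof is correct and is essentially the paper's argument: average the decomposition along $q_0H_i$, note that each cross term $\pi_{q_0H_i}\varphi_j$ is invariant under the finite-index subgroup $q_0H_i+q_0H_j$, replace $\varphi_i$ by $1-\pi_{q_0H_i}\ind_A\in[0,1]$, absorb the fully periodic differences (all computed from the original representation) into $c$, and enlarge $q_0$. Step 1 is only a harmless redundancy: it applies $\pi_{q_0H_j}$ to $\varphi_i$ rather than $\pi_{q_0H_i}$ to $\varphi_j$ (the same fact by symmetry), and its $\psi_i$ is exactly $\varphi_i$; the real content is Step 2, which is fine.
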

\begin{proof}
Averaging the decomposition along $q_0H_i$ gives
\[
 \varphi_i=-\pi_{q_0H_i}a+c-
 \sum_{j\ne i}\pi_{q_0H_i}\varphi_j.
\]
Every cross term is fully periodic, since it is invariant under $q_0H_i+q_0H_j$. Hence $\varphi_i$ differs from
\[
 \psi_i=1-\pi_{q_0H_i}a\in[0,1]
\]
by a fully periodic function. Set $\rho_i=\varphi_i-\psi_i$. Then
$a=(c-\sum_i\rho_i)-\sum_i\psi_i$.
All corrections are computed from the original representation, so these replacements can be made simultaneously. A common increase of $q_0$ accommodates their periods.
\end{proof}

\subsection{Polynomial phases and box averages}
We write $\{t\}\in[0,1)$ for fractional part. A real polynomial on $\Z^d$ with an irrational nonconstant coefficient is uniformly distributed modulo one along growing boxes. More generally, a finite vector of polynomial phases is uniformly distributed on the corresponding torus if every nonzero integer combination has an irrational nonconstant coefficient. We use this standard form of Weyl's theorem, as in~\cite[Section 8]{GT25}. In particular, fractional-part functions have first moment $1/2$ and second moment $1/3$, and two jointly equidistributed fractional-part functions have mixed moment $1/4$.

To specify the box averages, fix a basis $v_1,\ldots,v_j$ of a subgroup $L$ of rank $j\geq1$. For a bounded function $u$, set
\[
 \langle u\rangle_{x+L}
 =\lim_{N\to\infty}\frac{1}{(2N+1)^j}
   \sum_{n_1,\ldots,n_j=-N}^{N}
   u\left(x+\sum_{a=1}^{j}n_av_a\right)
\]
whenever the limit exists. We omit the subscript for averages on $\Z^d$. The \emph{density} of a Boolean function or of its support means this box average. These limits are distinct from the invariant means used in the preceding subsection; in each subsequent use, the relevant limits are shown to exist. The polynomial equidistribution conclusions hold after restriction to any coset of a finite-index subgroup. We will also use the elementary fact that an integer-valued real polynomial in one variable has rational coefficients, which follows by Newton interpolation at the integers.

\Needspace{22\baselineskip}
\section{A Boolean normal form in translation orbit closures}\label{sec:normal}
The normal form in this section is independent of the tiling equations. Its two-dimensional prototype is \cite[Proposition 8.7]{GT25}. The proof below extends the endpoint selection, polynomial-phase, and moment arguments to distinct codimension-one period subgroups.

\begin{theorem}[Orbit-closure normal form]\label{thm:normal}
Let $d\geq2$, and suppose $a:\Z^d\to\{0,1\}$ has a decomposition
\[
 a=c-\sum_{i=1}^r\varphi_i
\]
with $c$ fully periodic and real-valued, and with each $\varphi_i$ bounded, real-valued, and codimension-one periodic. For every prescribed integer $p\geq1$, there are
\[
 b\in\overline{\orb_{p\Z^d}(a)}
 \quad\text{and}\quad q\geq1,\qquad p\mid q,
\]
such that, for each coset $C=x+q\Z^d$, the function $b_C:\Z^d\to\{0,1\}$ defined by $b_C(y)=b(x+qy)$ satisfies one of the following alternatives:
\begin{enumerate}[label=\textup{(\roman*)}]
\item \emph{Codimension-one periodic restriction.} There is a subgroup $L_C\leq\Z^d$ of rank $d-1$ such that
\[
 b_C(y+h)=b_C(y)\qquad(y\in\Z^d,\ h\in L_C).
\]
The subgroup may depend on $C$, and the constant functions $0$ and $1$ are included in this alternative.
\item \emph{Exceptional affine restriction.} There are three pairwise nonparallel surjective integer linear forms $\ell_1,\ell_2,\ell_3:\Z^d\to\Z$ such that
\begin{equation}\label{eq:normalE}
 b_C(y)=2-\sum_{j=1}^3\{\alpha_j\ell_j(y)+\beta_j\},
\end{equation}
where $1,\alpha_j,\beta_j$ are linearly independent over $\Q$ for each $j$, and the rational span of $\ell_1,\ell_2,\ell_3$ has dimension two.
\end{enumerate}
The forms $\ell_j$ in \textup{(ii)} may be chosen from the primitive integer linear forms defining the distinct saturated period hyperplanes in the original decomposition. The choice of these three forms and of the parameters $\alpha_j,\beta_j$ may depend on $C$. The conclusions persist under replacement of $q$ by any multiple.
\end{theorem}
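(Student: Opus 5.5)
We follow the planar template of \cite[Proposition 8.7]{GT25}, replacing lines by the codimension-one hyperplanes $H_i=\ker\ell_i$ of \eqref{eq:hyperplanes}. First apply Lemma~\ref{lem:range}, so that $0\le\varphi_i\le1$, $c$ is $q_0\Z^d$-periodic and each $\varphi_i$ is $q_0H_i$-invariant, and replace $q_0$ by a multiple of $p$. On a coset $x+q_0\Z^d$ each $\varphi_i$ is then a function of the single integer variable $\ell_i(y)$ only: $\varphi_i(x+q_0y)=g_i(\ell_i(y))$ with $g_i:\Z\to[0,1]$. The restriction of $a$ becomes $a_{x,q_0}(y)=c_x-\sum_i g_{i}(\ell_i(y))$, with $c_x$ constant. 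We then pass to orbit closures. Translating by vectors in $p\Z^d$ translates each $g_i$ independently along $\Z$, as far as the transversality of the distinct $H_i$ allows; this is where we use that $H_i+H_j$ has finite index and that $H_j$ contains vectors with $\ell_i\ne0$. By compactness (a diagonal argument over cosets and a refinement of $q_0$) we can pass to a limit in which each one-variable profile $g_i$ is replaced by a limit profile with extra structure. A profile that is eventually constant or becomes periodic after selection contributes a codimension-one periodic, in fact fully periodic, term.

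The key structural step is the Greenfeld--Tao ``endpoint selection'' and difference argument. Apply finite differences $\Delta_h$ with $h\in\bigcap_{j\ne i}q H_j$ (a rank-one group, nonzero on $\ell_i$) to the Boolean identity. This isolates $\Delta g_i$ and shows that it takes only finitely many values (differences of Boolean values minus periodic data). Iterating over a subset of hyperplanes shows that integrality forces polynomial behaviour: after passing to a suitable limit point, each non-degenerate profile is of the form $g_i(n)=\{P_i(n)\}$ plus a periodic correction, with $P_i$ a real polynomial. Then we use Weyl equidistribution (Section~\ref{sec:prelim}). Suppose some group of the phases is jointly equidistributed on a coset; then the Booleanity $b^2=b$ gives moment identities, with first moments $1/2$, second moments $1/3$ and mixed moments $1/4$. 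These identities force the degree to be one and the number of surviving irrational phases to be exactly three, with the constant $2$. Any term whose phase is rational is periodic and is absorbed. If the phases fail to be jointly independent, an integer relation among them exhibits the sum as codimension-one periodic, or as a contradiction with Booleanity. Independence of $1,\alpha_j,\beta_j$ is arranged by choosing $\beta_j$ generically in the orbit closure, which we may do since translation along $\ell_j$ moves $\beta_j$ by $\alpha_j\Z$ and its closure covers $[0,1)$.

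Finally we check that the three surviving forms span only a two-dimensional space. If $\ell_1,\ell_2,\ell_3$ were independent, then the three phases would be jointly equidistributed. Then $2-\sum\{\theta_j\}$ would take non-Boolean values on a set of positive density, which is impossible. If only one or two irrational phases survive, the same moment computation gives a contradiction, so those cosets fall into alternative (i). Persistence under multiples of $q$ is immediate: restricting (i) or (ii) to subcosets keeps the same form, with $\alpha_j$ rescaled and $\beta_j$ shifted, and the independence conditions are preserved. I expect the main obstacle to be the polynomial-phase extraction. In the planar case a single difference operator kills all but one line direction. With $r$ hyperplanes in $\Z^d$, the intersections $\bigcap_{j\ne i}H_j$ may be trivial once $r>d$. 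One must then induct on the number of hyperplanes, differencing along $H_j\cap H_k$ to eliminate one term at a time. The delicate point is to keep the orbit-closure limits compatible across all cosets simultaneously.
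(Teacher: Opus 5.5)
\textbf{Main gap: components that are constant modulo one without being constant.} After the polynomial refinement, such a component is, on a coset $C$, a nonconstant $\{0,1\}$-valued $H_i$-invariant function, for example $\ind[\ell_i(y)\ge0]$. It is not fully periodic and cannot be absorbed into $K$. Its presence also breaks the derivation of $\theta(1-\theta)=R/12$. So your claim that rational-phase terms are ``periodic and absorbed'' is false.

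Working with $a$ itself does not avoid this. On $\Z^2$,
\[
 a(y)=2-\ind[y_1\ge0]-\ind[y_2\ge0]-\ind[y_1+y_2<0]
\]
is Boolean and has a decomposition of the required type. Yet its restriction to every coset of every $q\Z^2$ is a six-sector pattern that is neither $1$-periodic nor exceptional. A specific selection in the orbit closure is therefore essential. Your ``limit in which each profile acquires extra structure'' does not say which limit is taken or why it has the needed property.

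The missing idea is Lemma~\ref{lem:stationary}(a). Take a $q_0\Z^d$-invariant Borel probability measure $\mu$ on the orbit closure of the tuple $(a,\varphi_1,\dots,\varphi_r)$, and choose $Q$ with $Q\Z^d\subset q_0H_i+q_0H_j$ for $i\ne j$. For $y-x\in Q\Z^d$, on the $q_0H_i$-invariant event $\{\psi_i(x)=0,\ \psi_i(y)=1\}$ one has
\[
 b(x)-b(y)=1+\sum_{j\ne i}\bigl(\psi_j(x+u_j)-\psi_j(x)\bigr),\qquad u_j\in q_0H_i .
\]
The cross terms integrate to zero by invariance, so $b(x)=1$ and $b(y)=0$ almost surely. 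For a generic tuple this gives $b=1-\psi_i$ on every coset where $\psi_i$ is Boolean and nonconstant, which is alternative (i). Only after this may all constant-mod-one components be treated as constants and the moment argument be run.

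The same measure also gives the genericity of $\beta$, simultaneously for all indices, moduli and cosets: the event $A\alpha+B\beta+D=0$ with $B\ne0$ has pairwise disjoint translates along $nqe_i$, hence measure zero. Your version moves one $\beta_j$ at a time. But a translation moves every $\beta_{j,C}$ at once, so a simultaneous measure or Baire argument is needed there too.

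\textbf{Secondary errors, all repairable.}
\begin{itemize}
\item \emph{The difference step.} A single $\Delta_h$ with $h\in\bigcap_{j\ne i}H_j$ need not exist, since that intersection may be trivial or lie in $H_i$. Differencing along $H_j\cap H_k$ is not the remedy. Lemma~\ref{lem:polynomial} applies the commuting product $\prod_{j\ne i}\Delta_{q_0v_j}$ with $v_j\in H_j$ and $\ell_i(v_j)\ne0$. Each factor kills one other component, and the product kills $a$ and $c$ modulo one. This gives $\Delta_R^{r-1}u_i=0$ in $\R/\Z$, and Newton's formula then yields polynomial phases. This holds for any decomposition, with no limit and no cross-coset compatibility issue, so your anticipated main obstacle is not where the difficulty lies.
\item \emph{Degree one.} Moments cannot force degree one, since they are identical for every degree. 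The paper instead uses $\Delta_s\Delta_tP_i\in\Z$, with directions taken in the other two kernels.
\item \emph{The cases $R=1,2$.} The equation $\theta(1-\theta)=R/12$ is solvable in $[0,1]$ for $R=1,2$, because $K$ is only known to be real. These cases are excluded by integrality modulo one, not by the variance identity.
\item \emph{Joint independence of phases.} Pairwise joint equidistribution is automatic for distinct kernels: restrict to a line in $\ker\ell_j$ on which $\ell_i\ne0$. Only pairwise moments are needed, so your case where the phases ``fail to be jointly independent'' never arises.
\end{itemize}
Your joint-equidistribution argument for the rank-two condition is a valid alternative to the paper's linear-algebra argument.
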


In particular, the normal form can be obtained inside any closed translation-invariant space containing $a$. For a closed space invariant only under a prescribed finite-index subgroup, choose $p\Z^d$ inside that subgroup. This also permits preservation of finite convolution equations with a fixed periodic right-hand side.

The proof uses three lemmas: polynomial phases on lattice cosets, a stationary selection with controlled endpoint values and affine phases, and pairwise moment identities. Throughout these lemmas, components with the same period hyperplane have been combined as in~\eqref{eq:hyperplanes}, and their ranges have been normalized by Lemma~\ref{lem:range}.

\begin{lemma}[Polynomial phases on lattice cosets]\label{lem:polynomial}
Suppose
\[
 a=c-\sum_{i=1}^r\varphi_i,\qquad
 a\in\{0,1\}^{\Z^d},\quad 0\leq\varphi_i\leq1,
\]
where $c$ is $q_0\Z^d$-periodic and $\varphi_i$ is $q_0H_i$-periodic for the distinct hyperplanes~\eqref{eq:hyperplanes}. There is a multiple $q$ of $q_0$ such that, on each coset $C=x+q\Z^d$, each component either is constant modulo one or satisfies
\begin{equation}\label{eq:polynomialphase}
 \varphi_i(x+qy)\equiv P_{i,C}(\ell_i(y))\pmod1,
\end{equation}
where $P_{i,C}$ is a real polynomial of positive degree with irrational leading coefficient. This dichotomy persists on further scalar lattice cosets. A component is \emph{constant modulo one} on $C$ when its fractional part is independent of $y$; the component itself need not be constant. In that alternative, the component is either constant as a real-valued function or takes only the values $0$ and $1$ on $C$.
\end{lemma}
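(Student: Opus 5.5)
The plan is the difference-operator argument of Greenfeld and Tao, run separately for each component. Fix $i$ and a coset $x+q_0\Z^d$. Since $\varphi_i$ is $q_0H_i$-periodic and $H_i=\ker\ell_i$, the value $\varphi_i(x+q_0y)$ depends only on $\ell_i(y)$. Because $\ell_i$ is surjective, we can write $\varphi_i(x+q_0y)=g_{i,x}(\ell_i(y))$ for a function $g_{i,x}:\Z\to[0,1]$. Everything then reduces to a one-variable statement about $g_{i,x}$.

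To isolate $\varphi_i$, I use transversality. For each $j\neq i$, pick $h_j\in q_0H_j$ with $m_j:=\ell_i(h_j)\neq0$, which is possible because $H_j\neq H_i$. The operator $D=\prod_{j\ne i}\Delta_{h_j}$ does the following.
\begin{itemize}
\item It kills $c$, since $h_j\in q_0\Z^d$.
\item It kills each $\varphi_j$, $j\ne i$, since $\Delta_{h_j}\varphi_j=0$ and the difference operators commute.
\item It maps the integer-valued $a$ to an integer-valued function.
\end{itemize}
Hence $D\varphi_i$ is integer-valued. Restricted to the coset, this says that $\prod_{j\ne i}(T^{m_j}-I)g_{i,x}$ takes values in $\Z$. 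Let $M=\operatorname{lcm}(m_j)$. Each $T^{m_j}-I$ divides $T^{M}-I$ in the commutative ring generated by translations, so $(T^M-I)^{r-1}g_{i,x}\equiv0\pmod1$. For each residue $n_0\bmod M$, the sequence $k\mapsto g_{i,x}(n_0+Mk)$ is then annihilated modulo one by $\Delta^{r-1}$. An $\R/\Z$-valued sequence annihilated by $\Delta^{s}$ is a polynomial of degree $<s$ modulo one. I would prove this by induction on $s$: lift the polynomial $\Delta u$ to a real polynomial, then take a discrete antiderivative, which is again a real polynomial and differs from $u$ by a constant modulo one. So on each such residue class, $g_{i,x}(n_0+Mk)\equiv Q(k)\pmod1$ for a real polynomial $Q$.

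Next I remove the rational top-degree part. Let $e$ be the largest degree of $Q$ with an irrational coefficient. The terms of degree $>e$ form a rational polynomial, and modulo one this is periodic in $k$ with some period $N$. Restricting further to $k\equiv k_0\pmod N$, i.e.\ substituting $k=k_0+Nt$, makes those terms constant modulo one. The degree-$e$ term becomes $a_eN^et^e$, whose leading coefficient is still irrational. If no coefficient is irrational, the whole of $Q$ is periodic modulo one, and after such a restriction $\varphi_i$ is constant modulo one.

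There are finitely many choices of $i$, of cosets mod $q_0$, and of residues. So I take $q$ to be $q_0$ times a common multiple of all the moduli $MN$ that arise; since $\ell_i(qy)=q\ell_i(y)$, passing to cosets mod $q$ realizes these residue restrictions in the variable $\ell_i(y)$. This gives \eqref{eq:polynomialphase}, possibly after an affine reparametrization absorbed into $P_{i,C}$. Persistence on further scalar cosets is the same substitution $n\mapsto n_0+Rn$: it preserves positive degree and irrationality of the leading coefficient, and it preserves constancy modulo one. Finally, if $\varphi_i$ is constant modulo one on $C$ with value $t\in[0,1)$, then $0\le\varphi_i\le1$ forces $\varphi_i\equiv t$ when $t\neq0$, and $\varphi_i\in\{0,1\}$ when $t=0$.

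The main obstacle is the lifting step: turning the $\R/\Z$-valued identity $\Delta^{s}u\equiv0$ into an honest real polynomial, and keeping the choice of a single $q$ uniform across all components and cosets. The integrality bookkeeping in the difference argument should be routine.
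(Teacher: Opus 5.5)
Your proposal is correct and follows the paper's proof essentially step for step: transversal differences $\prod_{j\ne i}\Delta_{h_j}$, divisibility of each $\Delta_{m_j}$ into $\Delta_M$, polynomiality modulo one on progressions (your antiderivative induction replaces the paper's Newton formula), then transfer to scalar lattice cosets and removal of rational top-degree terms. There are two small slips: in the coset variable $y$ the step is $\ell_i(h_j)/q_0$ rather than $m_j$, which is harmless, and for $r=1$ the empty product does not kill $c$, so that case must be handled directly from $\varphi_1\equiv c\pmod 1$, as the paper does.
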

\begin{proof}
The case $r=0$ is empty. For $r=1$, integrality of $a$ makes $\varphi_1$ constant modulo one on every $q_0\Z^d$ coset. Suppose $r\geq2$, and fix $i$. For every $j\ne i$, choose $v_j\in H_j$ such that $r_j=\ell_i(v_j)\ne0$. Reduce the decomposition modulo one and apply the commuting differences
\[
 \prod_{j\ne i}\Delta_{q_0v_j}.
\]
They annihilate the constant term and every component other than $\varphi_i$. On a fixed coset $x+q_0\Z^d$, invariance under $q_0H_i$ gives a well-defined sequence
\[
 \varphi_i(x+q_0y)\bmod1=u_i(\ell_i(y)),
 \qquad u_i:\Z\to\R/\Z.
\]
Surjectivity of $\ell_i$ then gives
\begin{equation}\label{eq:mixed1ddiff}
 \prod_{j\ne i}\Delta_{r_j}u_i=0.
\end{equation}
Let $R$ be a positive common multiple of all $|r_j|$. In the Laurent polynomial ring of the one-dimensional shift, each $\Delta_{r_j}$ divides $\Delta_R$, including when $r_j<0$. Multiplying these divisibility identities in~\eqref{eq:mixed1ddiff} yields $\Delta_R^{r-1}u_i=0$; no division in $\R/\Z$ is involved. Newton's finite-difference formula now gives, for each $a\in\{0,\ldots,R-1\}$,
\begin{equation}\label{eq:Newtoncircle}
 u_i(a+Rn)=\sum_{h=0}^{r-2}\binom{n}{h}(\Delta_R^h u_i)(a)
 \quad\text{in }\R/\Z,\qquad n\in\Z.
\end{equation}
The formula holds also for negative $n$, since the binomial polynomials satisfy the same difference recurrence in both directions. Arbitrary real lifts of the finitely many coefficients in~\eqref{eq:Newtoncircle} give a real polynomial representative of degree at most $r-2$ on each progression.

To pass from these progressions to scalar lattice cosets, fix $z\in\Z^d$ and write $\ell_i(z)=a+Rt$ with $0\leq a<R$. If $P_a$ represents $u_i(a+Rn)$, then
\[
 \varphi_i(x+q_0z+q_0Ry)
 \equiv P_a\bigl(t+\ell_i(y)\bigr)\pmod1.
\]
Thus each progression representation gives a polynomial in the same primitive form $\ell_i$ on the corresponding scalar lattice coset.

There are finitely many indices and initial cosets, so a common scalar refinement gives polynomial representatives for every component on every coset. A real polynomial with rational nonconstant coefficients is constant modulo one on sufficiently fine arithmetic progressions: a common denominator of those coefficients provides such a modulus. For a polynomial having an irrational nonconstant coefficient, remove its rational terms above the largest irrational degree in this way. On each resulting progression the remaining representative has positive degree and irrational leading coefficient. Taking a common refinement for the finitely many polynomials proves~\eqref{eq:polynomialphase}.

An integer affine restriction with nonzero step preserves irrationality of the leading coefficient. This proves persistence under further refinement. Finally, a $[0,1]$-valued function with constant fractional part is constant unless that fractional part is zero, in which case its values lie in $\{0,1\}$.
\end{proof}

\begin{lemma}[Stationary selection]\label{lem:stationary}
Under the hypotheses of Lemma~\ref{lem:polynomial}, there is a tuple
\[
 (b,\psi_1,\ldots,\psi_r)
 \in\overline{\orb_{q_0\Z^d}(a,\varphi_1,\ldots,\varphi_r)}
\]
with the following properties. There is a multiple $Q$ of $q_0$ such that:
\begin{enumerate}[label=\textup{(\alph*)}]
\item For every $i$ and every $x,y\in\Z^d$ with $y-x\in Q\Z^d$, the conditions $\psi_i(x)=0$ and $\psi_i(y)=1$ imply $b(x)=1$ and $b(y)=0$.
\item For every $i$, every multiple $q$ of $q_0$, every $x\in\Z^d$, and every pair $\alpha,\beta\in\R$, if $\alpha$ is irrational and
\[
 \psi_i(x+qy)\equiv\alpha\ell_i(y)+\beta\pmod1
 \qquad\text{for all }y\in\Z^d,
\]
then $1,\alpha,\beta$ are linearly independent over $\Q$.
\end{enumerate}
The selected tuple satisfies $b=c-\sum_i\psi_i$, with each $\psi_i$ taking values in $[0,1]$ and invariant under $q_0H_i$.
\end{lemma}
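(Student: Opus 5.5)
We will obtain the tuple by a two-stage limiting argument inside $\overline{\orb_{q_0\Z^d}(a,\varphi_1,\ldots,\varphi_r)}$. The orbit closure of the tuple is compact, since each $\varphi_i$ takes values in $[0,1]$ after Lemma~\ref{lem:range}. Every limit point keeps the algebraic identity $b=c-\sum_i\psi_i$, because $c$ is $q_0\Z^d$-periodic and we only translate by $q_0\Z^d$. Limits also keep the range $[0,1]$, the $q_0H_i$-invariance of each $\psi_i$, and the Boolean character of $b$. So only (a) and (b) need work. Both are statements about finitely many patterns along cosets of a lattice, and we aim to make them hold by choosing a limit point that is ``generic'' for a suitable stationary measure.

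\textbf{Step 1: build a stationary measure.} Take F\o lner averages of point masses at translates $T^{q_0h}(a,\varphi_1,\ldots,\varphi_r)$ over boxes $h\in[-N,N]^d$. Pass to a weak-$*$ limit $\mu$, which is a $q_0\Z^d$-invariant Borel probability measure on the orbit closure. By the Lemma~\ref{lem:polynomial} dichotomy, on each coset of $q\Z^d$ the original $\varphi_i$ is either constant mod one or a polynomial phase $P_{i,C}(\ell_i)$ with irrational leading coefficient. This structure passes to $\mu$-almost every point. The polynomial phases then become, in the limit, phases whose coefficients are randomized according to Weyl equidistribution along boxes.

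\textbf{Step 2: prove (b) for $\mu$-a.e.\ point.} Suppose $\psi_i(x+qy)\equiv\alpha\ell_i(y)+\beta$ with $\alpha$ irrational. This can only arise from a degree-one phase, or from a higher-degree phase degenerating, which is excluded at generic points. The coefficient $\alpha$ is fixed by the original polynomial on that coset. The constant $\beta$, however, is the value of the phase at the translating vector, and by Weyl it is equidistributed in $\R/\Z$ under $\mu$. A relation $m_0+m_1\alpha+m_2\beta=0$ with $m_2\ne0$ therefore confines $\beta$ to finitely many values, a $\mu$-null event. There are countably many relations and finitely many $(i,q\text{-coset})$ data once $q$ is bounded by a refinement argument; the dichotomy persists under refinement. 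Hence (b) holds $\mu$-almost surely.

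\textbf{Step 3: prove (a), the main obstacle.} We want no pair $x\equiv y \pmod Q$ with $\psi_i(x)=0$, $\psi_i(y)=1$ unless $b(x)=1$ and $b(y)=0$. Suppose $\psi_i(x)=0$. Then $b(x)=c(x)-\sum_{j\ne i}\psi_j(x)$, and similarly $b(y)=c(y)-1-\sum_{j\ne i}\psi_j(y)$, where $c(x)=c(y)$ since $Q$ is a multiple of $q_0$. This gives $b(x)-b(y)=1+\sum_{j\ne i}(\psi_j(y)-\psi_j(x))$. The plan is to use the transversality of the distinct hyperplanes to translate $y$ along $Q(H_i)$, which leaves $\psi_i(y)$ fixed but moves the other components. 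We then compare averages via $\pi_{q_0H_i}$ or the measure $\mu$, showing that $\sum_{j\ne i}(\psi_j(y)-\psi_j(x))$ is $\ge 0$ on a full-measure set for the endpoint values. Once that holds, $b(x)-b(y)\ge1$, which forces $b(x)=1$ and $b(y)=0$.

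Concretely, we first try an endpoint selection, following \cite[Section 8]{GT25}. Among limit points, choose one maximizing the measure of $\{\psi_i\in\{0,1\}\}$, a lexicographic extremal choice made one $i$ at a time. Then use ergodic decomposition, and choose $Q$ so that $\psi_i=0$ and $\psi_i=1$ occur only on cosets where the remaining components are forced to their extreme configurations.

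If this fails, the fallback is to pass to $\psi_i'=\pi$-averaged replacements differing by fully periodic functions, absorbed into $c$. This is the delicate point, and we expect most of the effort there. The final step is to pick a single $\mu$-generic point satisfying (a) and (b) simultaneously, which is possible because both events have full measure.
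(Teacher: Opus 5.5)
\textbf{There is a genuine gap: part (a), the core of the lemma, is not proved.} Your setup is right. You have the $q_0\Z^d$-invariant measure $\mu$ from F\o lner averages, and you correctly note that the closed conditions ($b=c-\sum_i\psi_i$, the ranges, the $q_0H_i$-invariance) survive in the orbit closure. You also have the endpoint identity $b(x)-b(y)=1+\sum_{j\ne i}(\psi_j(y)-\psi_j(x))$ on the event $\{\psi_i(x)=0,\ \psi_i(y)=1\}$.

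After that, (a) is only a plan. The extremal choice maximizing the measure of $\{\psi_i\in\{0,1\}\}$, the ergodic decomposition, and the choice of $Q$ ``so that the remaining components are forced to their extreme configurations'' are never made precise. The fallback of replacing $\psi_i$ by averaged versions and absorbing the difference into $c$ would not give a tuple in the orbit closure of $(a,\varphi_1,\ldots,\varphi_r)$ satisfying $b=c-\sum_i\psi_i$ with the original $c$, which is what the statement requires.

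The missing idea is a direct integration against $\mu$:
\begin{enumerate}[label=\textup{(\arabic*)}]
\item Choose $Q$ with $Q\Z^d\subset q_0H_i+q_0H_j$ for all $i\ne j$; this is where transversality enters. For $y-x\in Q\Z^d$ write $y-x=u_j+w_j$ with $u_j\in q_0H_i$ and $w_j\in q_0H_j$. Then $\psi_j(y)=\psi_j(x+u_j)$.
\item The event $\mathcal D_{i,x,y}=\{\psi_i(x)=0,\ \psi_i(y)=1\}$ is invariant under $q_0H_i$ because $\psi_i$ is. Hence it is invariant under $u_j$, and invariance of $\mu$ gives $\int_{\mathcal D_{i,x,y}}(\psi_j(x+u_j)-\psi_j(x))\,d\mu=0$.
\item Therefore $\int_{\mathcal D_{i,x,y}}(b(x)-b(y))\,d\mu=\mu(\mathcal D_{i,x,y})$. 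Since $b(x)-b(y)\le1$ pointwise, this forces $b(x)=1$ and $b(y)=0$ almost surely on the event.
\item Intersecting over the countably many $(i,x,y)$ gives (a).
\end{enumerate}
No extremal selection or ergodic decomposition is needed. On the sign: the sum over $j\ne i$ is automatically $\le0$ on the event. What has to be shown is that its integral vanishes, not a pointwise inequality $\ge0$.

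\textbf{Part (b) has the right shape but the wrong source of randomness.} The equidistribution of $\beta$ should come from invariance of $\mu$, not from Weyl's theorem applied to the original phases. Your route would require tracking how the structure of Lemma~\ref{lem:polynomial} passes to limit points. It would also have to cover every multiple $q$ of $q_0$, not a bounded set of moduli.

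The invariance argument is short. Take $e_i$ with $\ell_i(e_i)=1$; translation by $nqe_i$ fixes the slope $\alpha$ and replaces the intercept by $\{\beta+n\alpha\}$. Fix integers $A,B,D$ with $B\ne0$. The event $\{A\alpha+B\beta+D=0\}$ then has pairwise disjoint translates under $n\mapsto nqe_i$, because a coincidence at times $n\ne m$ would give $B(n-m)\alpha\in\Z$, contradicting irrationality. Invariance and finiteness of $\mu$ make the event null. These countably many null events, together with those from (a), are avoided by a single generic point.
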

\begin{proof}
Let $X$ be the indicated orbit closure in the compact metrizable space
\[
 \{0,1\}^{\Z^d}\times\prod_{i=1}^r[0,1]^{\Z^d}.
\]
The decomposition $b=c-\sum_i\psi_i$ and the periods of the components are closed conditions preserved by $q_0\Z^d$ translations; $c$ is fixed by these translations. Hence these conditions hold throughout $X$. Averaging point masses over F\o lner boxes in $q_0\Z^d$ and taking a weak limit gives an invariant Borel probability measure $\mu$ on $X$.

Choose $Q$ divisible by $q_0$ such that
\begin{equation}\label{eq:Qchoice}
 Q\Z^d\subset q_0H_i+q_0H_j\qquad(i\ne j).
\end{equation}
If there is at most one component, take $Q=q_0$. Fix $i,x,y$ with $y-x\in Q\Z^d$, and let
\[
 \mathcal D_{i,x,y}
 =\{(b,\psi_1,\ldots,\psi_r)\in X:
       \psi_i(x)=0,\ \psi_i(y)=1\}.
\]
The event $\mathcal D_{i,x,y}$ is invariant under $q_0H_i$. On this event the decomposition gives
\begin{equation}\label{eq:endpointdifference}
 b(x)-b(y)=1+
 \sum_{j\ne i}\bigl(\psi_j(y)-\psi_j(x)\bigr).
\end{equation}
For each $j\ne i$, write $y-x=u_j+w_j$ with $u_j\in q_0H_i$ and $w_j\in q_0H_j$, using~\eqref{eq:Qchoice}. The $j$th summand in~\eqref{eq:endpointdifference} equals $\psi_j(x+u_j)-\psi_j(x)$. Its integral over $\mathcal D_{i,x,y}$ vanishes by invariance of both $\mu$ and $\mathcal D_{i,x,y}$ under $u_j$. Consequently
\[
 \int_{\mathcal D_{i,x,y}}\bigl(b(x)-b(y)\bigr)\,d\mu=\mu(\mathcal D_{i,x,y}).
\]
Since $b(x)-b(y)\leq1$, equality forces $b(x)=1$, $b(y)=0$ almost surely on $\mathcal D_{i,x,y}$. Intersecting the resulting full-measure sets over the countably many $i,x,y$ proves (a) almost surely.

For (b), choose $e_i\in\Z^d$ with $\ell_i(e_i)=1$. Fix $i$, a multiple $q$ of $q_0$, and $x\in\Z^d$. Representatives of the affine parameters in $[0,1)$, whenever the affine identity holds, are recovered by the Borel formulas
\begin{equation}\label{eq:measurablephases}
 \beta=\{\psi_i(x)\},\qquad
 \alpha=\{\psi_i(x+qe_i)-\psi_i(x)\}.
\end{equation}
The affine identity is a countable collection of coordinate equalities modulo one, so the event that it holds with irrational $\alpha$ is Borel. For fixed integers $A,B,D$ with $B\ne0$, consider the subevent
\[
 A\alpha+B\beta+D=0.
\]
Its translates under $nqe_i$, $n\in\Z$, are pairwise disjoint. Indeed, along such a translation orbit the slope $\alpha$ is fixed, whereas the intercept becomes $\{\beta+n\alpha\}$. If the same relation holds at distinct times $n,m$, write $\{\beta+j\alpha\}=\beta+j\alpha-k_j$ with $k_j\in\Z$ for $j=n,m$. Subtracting the two relations gives
\[
 B(n-m)\alpha=B(k_n-k_m),
\]
contrary to irrationality. Invariance and finiteness of $\mu$ imply that this event has measure zero. Nontrivial relations with $B=0$ are excluded by irrationality of $\alpha$.

There are only countably many indices, scalar moduli, lattice points representing cosets, and integer relations. We can therefore select one tuple for which (a) and (b) hold simultaneously for all of them. Changing representatives of $\alpha$ or $\beta$ by integers does not affect the claimed rational independence. Replacing $x$ by another representative $x+qz$ changes the intercept by $\alpha\ell_i(z)$ modulo one, which also preserves this independence.
\end{proof}

\begin{lemma}[Pairwise moments]\label{lem:moments}
Suppose $U_i:\Z^d\to[0,1]$ satisfies
\[
 U_i(y)\equiv P_i(\ell_i(y))\pmod1,
\]
where the $\ell_i$ have pairwise distinct kernels and every $P_i$ has positive degree and irrational leading coefficient. On $\Z^d$ and on every finite-index affine coset,
\begin{equation}\label{eq:moments}
 \langle U_i\rangle=\frac12,\qquad
 \langle U_i^2\rangle=\frac13,\qquad
 \langle U_iU_j\rangle=\frac14\quad(i\ne j).
\end{equation}
In particular, the variance of the sum of $R$ such functions is $R/12$.
\end{lemma}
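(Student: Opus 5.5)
The plan is to reduce every moment in \eqref{eq:moments} to Weyl equidistribution, as recalled in the preliminaries. The key observation is that $U_i$ is determined by its fractional part up to the endpoint ambiguity. Since $U_i\in[0,1]$ and $U_i\equiv P_i(\ell_i(y))\pmod 1$, we have $U_i(y)=\{P_i(\ell_i(y))\}$ except at points where the fractional part vanishes and $U_i(y)=1$. So I will first show that this exceptional set has density zero along boxes. Since $P_i(\ell_i(y))$ is a polynomial in $y$ with an irrational nonconstant coefficient, it is equidistributed modulo one. Hence the proportion of points with $\{P_i(\ell_i(y))\}\in[0,\varepsilon)$ tends to $\varepsilon$, and the exceptional set has density zero. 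The functions involved are bounded, so replacing $U_i$ by $\{P_i\circ\ell_i\}$ changes none of the box averages. The same argument applies on any finite-index affine coset. Restricting $y\mapsto x+qy$ composes $\ell_i$ with an affine map of nonzero step, so the leading coefficient remains irrational, as noted in Lemma~\ref{lem:polynomial}.

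With this replacement, $\langle U_i\rangle=\int_0^1t\,dt=1/2$ and $\langle U_i^2\rangle=\int_0^1t^2\,dt=1/3$ follow from one-dimensional equidistribution of the single phase $P_i(\ell_i(y))$. Here I use that $t\mapsto t$ and $t\mapsto t^2$ are Riemann integrable on $[0,1)$, so the standard approximation by continuous functions applies. The one-dimensional reduction is cleanest if I note that box averages over $\Z^d$ of $g(\ell_i(y))$ are well behaved. Alternatively, I will simply invoke the multivariate Weyl theorem for the polynomial $y\mapsto P_i(\ell_i(y))$ on $\Z^d$.

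The mixed moment is the main point. For $i\neq j$ I need the pair $(P_i(\ell_i(y)),P_j(\ell_j(y)))$ to be jointly equidistributed on $(\R/\Z)^2$. By the criterion recalled above, it suffices that every nonzero integer combination $mP_i(\ell_i(y))+nP_j(\ell_j(y))$ has an irrational nonconstant coefficient as a polynomial on $\Z^d$. If $n=0$ or $m=0$ this is immediate. Otherwise, let $D_i$ be the degree of $P_i$ and let $\lambda_i$ be its leading coefficient. The top-degree homogeneous part of $mP_i(\ell_i)$ is $m\lambda_i\ell_i^{D_i}$. Since $\ell_i$ and $\ell_j$ are nonparallel, the monomials $\ell_i^{D_i}$ and $\ell_j^{D_j}$ are linearly independent as polynomials. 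If $D_i\ne D_j$ they are of different degree. If $D_i=D_j=D$, the powers of two nonproportional linear forms are nonproportional, by unique factorization in the polynomial ring. To isolate a coefficient, I choose a monomial in which $\ell_i^{D}$ and $\ell_j^{D}$ contribute differently. Concretely, change coordinates so that $\ell_i$ and $\ell_j$ become two coordinates; this is possible rationally because the forms are independent. Then the coefficient of $y_1^{D_i}$ is $m\lambda_i$ times a nonzero rational number, and this is irrational. The change of variables is rational and preserves irrationality of some coefficient, up to rational scaling and a finite-index sublattice. I will handle this by restricting to cosets of a finite-index sublattice on which the rational coordinates are integral, and averaging over the finitely many cosets. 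Joint equidistribution then gives $\langle U_iU_j\rangle=\int\!\!\int st\,ds\,dt=1/4$, again after removing the density-zero endpoint set.

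Finally, for $R$ such functions with pairwise distinct kernels, I write $S=\sum U_i$. Expanding gives $\langle S^2\rangle=R/3+R(R-1)/4$ and $\langle S\rangle^2=R^2/4$, so the variance is $R/3-R/4=R/12$. I expect the only genuine obstacle to be the careful verification of the irrational-coefficient condition under the change of coordinates in the mixed case. Everything else is routine.
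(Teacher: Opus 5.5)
Your proposal is correct and follows essentially the same route as the paper: Weyl equidistribution for single phases and for pairs, a density-zero set of integral phase values to absorb the endpoint ambiguity, Riemann integrability of the fractional-part moments, and expansion of the second moment for the variance. The one difference is how you check the irrational-coefficient criterion for $mP_i(\ell_i)+nP_j(\ell_j)$. The paper restricts to an integer line in $\ker\ell_j$ on which $\ell_i$ is nonzero, which kills the $j$th phase and leaves a one-variable polynomial with irrational leading coefficient. Your rational change of coordinates works equally well once you note that having all nonconstant coefficients rational is preserved by invertible rational linear substitutions. So the finite-index sublattice and coset-averaging detour you anticipate is unnecessary, and you can apply Weyl directly along boxes in the original coordinates.
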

\begin{proof}
Each polynomial phase is equidistributed modulo one. To check joint equidistribution of a pair, take integers $A,B$, not both zero. If $A\ne0$, restrict
\[
 A P_i(\ell_i(y))+B P_j(\ell_j(y))
\]
to an integer line in $\ker\ell_j$ on which $\ell_i$ is nonzero. The restricted polynomial has positive degree and irrational leading coefficient. The original multivariate polynomial must therefore have an irrational nonconstant coefficient. The case $A=0$ follows directly from the second phase. The same argument works on any finite-index affine coset, using a direction in its period lattice intersected with $\ker\ell_j$.

Weyl's theorem now gives joint uniform distribution on $(\R/\Z)^2$. The fractional-part functions and their products are Riemann integrable on the torus. Integral phase values form a set of density zero, so choosing the value $1$ instead of $0$ at such points does not affect any of the moments. This proves~\eqref{eq:moments}. Expanding the second moment of the sum gives the stated variance; only pairwise joint equidistribution is used.
\end{proof}

\begin{proof}[Proof of Theorem~\ref{thm:normal}]
If there are no components, choose a scalar period $q$ of $a=c$ divisible by $p$ and take $b=a$. Otherwise combine period hyperplanes and apply Lemma~\ref{lem:range}. Choose the initial modulus $q_0$ divisible by $p$ and by all required scalar periods. Apply Lemma~\ref{lem:stationary} to obtain a tuple $(b,\psi_1,\ldots,\psi_r)$. Its Boolean coordinate belongs to $\overline{\orb_{p\Z^d}(a)}$, and
\[
 b=c-\sum_i\psi_i.
\]

Apply Lemma~\ref{lem:polynomial} to the selected tuple, and take its resulting modulus $q$ divisible by $Q$. This order is deliberate: the polynomial refinement may depend on the selected tuple, while the conclusions of Lemma~\ref{lem:stationary} were arranged simultaneously for every possible scalar modulus.

Fix a coset $C=x+q\Z^d$. A component with constant fractional part is either constant as a real-valued function or Boolean there. Suppose $\psi_i$ is Boolean and nonconstant on $C$, and choose $z_0,z_1\in C$ with $\psi_i(z_0)=0$ and $\psi_i(z_1)=1$. If $z\in C$ satisfies $\psi_i(z)=0$, apply Lemma~\ref{lem:stationary}(a) to $z,z_1$ to obtain $b(z)=1$. If $\psi_i(z)=1$, apply the same lemma to $z_0,z$ to obtain $b(z)=0$. These applications are valid because $q$ is a multiple of $Q$. Thus
\begin{equation}\label{eq:Booleanidentification}
 b=1-\psi_i\qquad\text{on }C.
\end{equation}
Since $\psi_i$ is invariant under $qH_i$, equation~\eqref{eq:Booleanidentification} makes $b_C$ invariant under $H_i$. This is alternative (i), so no further classification of the other components on this coset is needed.

We may therefore assume that every component with constant fractional part is constant as a real-valued function on $C$. Subtract these constant component values from the constant value of $c$ on $C$, and denote the result by $K$. If there are no remaining components, $b_C$ is constant and again satisfies (i). Otherwise, we have
\begin{equation}\label{eq:Ksum}
 b_C=K-\sum_{j=1}^{R}U_j,\qquad R\geq1,
\end{equation}
where $K$ is constant and every $U_j$ has an irrational polynomial phase, with the defining linear forms having pairwise distinct kernels. Let $\theta=\langle b_C\rangle=K-R/2$ and $S=\sum_{j=1}^R U_j$. Lemma~\ref{lem:moments} gives
\[
 \langle S\rangle=\frac R2,
 \qquad
 \langle S^2\rangle=\frac R3+\frac{R(R-1)}4.
\]
Consequently $\langle S^2\rangle-\langle S\rangle^2=R/12$. Since $b_C=K-S$ and $b_C^2=b_C$, it follows that
\begin{equation}\label{eq:variancebound}
 \theta(1-\theta)=\frac{R}{12}.
\end{equation}
As $0\leq\theta\leq1$, this forces $R\leq3$. Only the individual second moments and the pairwise mixed moments have been used; joint equidistribution of all $R$ phases is not required.

We rule out $R=1$ and $R=2$. With one component, reducing~\eqref{eq:Ksum} modulo one forces its phase to be constant modulo one. With two components, choose a vector $v\in\ker\ell_2$ with $s=\ell_1(v)\ne0$. Taking a difference of~\eqref{eq:Ksum} modulo one gives
\[
 \Delta_sP_1(n)\in\Z\qquad(n\in\Z).
\]
The polynomial on the left has irrational leading coefficient, including when it is a constant difference of an affine polynomial. It cannot be integer-valued on all of $\Z$. Thus $R=3$.

For each of these three phases, choose directions in the other two kernels on which its own defining linear form is nonzero. Taking the two corresponding differences modulo one gives
\begin{equation}\label{eq:secondphasedifference}
 \Delta_s\Delta_tP_i(n)\in\Z\qquad(n\in\Z),\qquad st\ne0.
\end{equation}
If $\deg P_i\geq2$, the left side again has irrational leading coefficient and cannot be integer-valued. Hence every phase is affine with irrational slope. Lemma~\ref{lem:stationary}(b) gives rational independence of $1,\alpha_i,\beta_i$. No affine phase can take an integer value at any lattice point, so the $[0,1]$-valued components agree pointwise with
\[
 U_i(y)=\{\alpha_i\ell_i(y)+\beta_i\}.
\]
Equation~\eqref{eq:variancebound} gives $\theta=1/2$, and then $K=2$ in~\eqref{eq:Ksum}.

It remains to check the rational rank of the three forms. Reducing~\eqref{eq:normalE} modulo one and subtracting its value at zero shows that
\[
 \sum_{i=1}^3\alpha_i\ell_i(y)\in\Z
 \qquad(y\in\Z^d).
\]
If the three integer linear forms were rationally independent, three independent columns of their coefficient matrix would give an invertible rational system for the $\alpha_i$. Since the displayed linear form has integer coefficients, all three $\alpha_i$ would be rational. This is impossible. The forms are pairwise nonparallel, so their rational rank is exactly two.

The argument applies to each of the finitely many cosets for the modulus already chosen, so the alternatives hold simultaneously. To check persistence, refine $C=x+q\Z^d$ to a coset $x+qz+mq\Z^d$. The new restriction is $y\mapsto b_C(z+my)$. For an exceptional restriction, its $i$th affine phase becomes
\[
 m\alpha_i\ell_i(y)+\beta_i+\alpha_i\ell_i(z).
\]
Rational independence of $1,\alpha_i,\beta_i$ implies rational independence of $1,m\alpha_i,\beta_i+\alpha_i\ell_i(z)$. For a codimension-one-periodic restriction with period subgroup $L$, the new period subgroup is
\[
 \{h\in\Z^d:mh\in L\}=m^{-1}(L\cap m\Z^d),
\]
which still has rank $d-1$. This proves persistence and completes the proof.
\end{proof}

\begin{remark}\label{rem:ranktwo}
Every exceptional restriction~\eqref{eq:normalE} is invariant under the common kernel of its three defining linear forms, a subgroup of rank $d-2$. Hence the restriction factors through a quotient of $\Z^d$ of rank two. In particular, the term exceptional does not mean that the function has no nonzero periods. The proof does not require a choice of quotient coordinates or a compatibility condition between the rank-two spaces arising on different residue classes.
\end{remark}

\section{Half-density functions and periodic replacement}\label{sec:graph}
We first record the precise class of functions that occurs in the second alternative of Theorem~\ref{thm:normal}. A Boolean function $u:\Z^d\to\{0,1\}$ is called \emph{exceptional} if there are real numbers $\alpha_j,\beta_j$ and surjective integer linear forms $\ell_j:\Z^d\to\Z$, for $j=1,2,3$, such that
\begin{equation}\label{eq:E}
 u(y)=2-U_1(y)-U_2(y)-U_3(y),\qquad
 U_j(y)=\{\alpha_j\ell_j(y)+\beta_j\},
\end{equation}
and the following conditions hold: the forms are pairwise nonparallel, their rational span has dimension two, and $1,\alpha_j,\beta_j$ are linearly independent over $\Q$ for each $j$. The requirement that $u$ be Boolean is part of the definition; the parameter conditions alone are not asserted to imply it.

Rational independence implies that no $\alpha_j\ell_j(y)+\beta_j$ is an integer, so $0<U_j(y)<1$ at every lattice point. Each $U_j$ has box average $1/2$, and therefore $\langle u\rangle=1/2$. In particular, both level sets of $u$ are nonempty. Translates of exceptional functions are exceptional: translation by $t\in\Z^d$ replaces $\beta_j$ by $\beta_j+\alpha_j\ell_j(t)$ and preserves all the stated conditions. The next two lemmas provide the properties needed for simultaneous replacement.

\begin{lemma}[No periodic subset of a level set]\label{lem:Elevels}
Let $u$ be exceptional. Neither $\{u=0\}$ nor $\{u=1\}$ contains a nonempty codimension-one-periodic subset.
\end{lemma}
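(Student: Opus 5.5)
The plan is a proof by contradiction using a single box average along the period group. Let $u$ be exceptional, written as in~\eqref{eq:E}. Suppose some nonempty set $S\subset\{u=\varepsilon\}$ is invariant under a subgroup $L$ of rank $d-1$, where $\varepsilon\in\{0,1\}$. Pick $x\in S$. Then the whole coset $x+L$ lies in $S$, so
\[
 U_1(x+h)+U_2(x+h)+U_3(x+h)=2-\varepsilon\qquad(h\in L).
\]
The idea is to take the box average $\langle\cdot\rangle_{x+L}$ of both sides, with respect to a fixed basis $v_1,\ldots,v_{d-1}$ of $L$, and show that the left side cannot average to $1$ or to $2$.

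First I would determine which forms can vanish on $L$. The saturation of $L$ is $\ker m$ for a primitive form $m$. A form $\ell_j$ vanishes on $L$ exactly when $\ker\ell_j\supset L$, hence exactly when $\ell_j$ is parallel to $m$. The three forms are pairwise nonparallel, so at most one $\ell_j$ vanishes on $L$.

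Next I would compute the average of each term.
\begin{itemize}
\item If $\ell_j$ vanishes on $L$, then $U_j$ is constant on $x+L$, equal to $U_j(x)$. As recorded before the lemma, $0<U_j(x)<1$, because $1,\alpha_j,\beta_j$ are rationally independent.
\item If $\ell_j$ does not vanish on $L$, then $\ell_j(v_a)\ne0$ for some $a$. Along the box parametrization $n\mapsto x+\sum_a n_av_a$, the phase $\alpha_j\ell_j(x)+\beta_j+\sum_a n_a\alpha_j\ell_j(v_a)$ is affine in $n$ with an irrational coefficient $\alpha_j\ell_j(v_a)$. By Weyl's theorem, in the form recalled in Section~\ref{sec:prelim}, the phase is equidistributed modulo one along the boxes. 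Since the fractional part is Riemann integrable, the limit $\langle U_j\rangle_{x+L}$ exists and equals $1/2$.
\end{itemize}

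Now I would compare the two sides.
\begin{itemize}
\item If no form vanishes on $L$, the left side averages to $3/2$, which is neither $1$ nor $2$.
\item If exactly one form $\ell_k$ vanishes on $L$, the average is $1+U_k(x)$. This equals $1$ or $2$ only if $U_k(x)\in\{0,1\}$, which contradicts $0<U_k(x)<1$.
\end{itemize}
Either way we reach a contradiction, which proves the lemma for both level sets.

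No step looks like a serious obstacle. The only point needing care is the existence of the box averages along $L$ rather than along $\Z^d$, which the Weyl argument above supplies. The parameter conditions are used only to ensure $U_k(x)\in(0,1)$ and the irrationality of the slopes. The rank-two condition on the span of the forms is not needed here.
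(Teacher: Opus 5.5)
Your proposal is correct and is essentially the paper's own argument. Both proofs observe that at most one $\ell_j$ vanishes on $L$, then box-average along a coset $x+L$ contained in the level set. They get the value $1/2$ or $1-U_k(x)$ for the average of $u$, which lies strictly between $0$ and $1$ and so contradicts constancy on that coset.
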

\begin{proof}
Let $L\leq\Z^d$ have rank at least $d-1$, and fix $z\in\Z^d$. At most one of the three forms $\ell_j$ vanishes on $L$: two distinct kernels intersect in rank $d-2$. If $\ell_j$ does not vanish on $L$, its image is a nonzero subgroup of $\Z$, and the irrational affine phase defining $U_j$ is uniformly distributed modulo one along boxes in $L$. Thus the average of $U_j$ on $z+L$ is $1/2$. If $\ell_j$ vanishes on $L$, the function is constant there with value $U_j(z)$. It follows that
\begin{equation}\label{eq:hyperplanemean}
 \langle u\rangle_{z+L}=
 \begin{cases}
 \frac12,&\text{if no $\ell_j$ vanishes on $L$},\\[2pt]
 1-U_j(z),&\text{if $\ell_j$ vanishes on $L$}.
 \end{cases}
\end{equation}
Both alternatives lie strictly between zero and one. Hence $u$ is not constant on $z+L$. A nonempty set invariant under $L$ contains a whole coset of $L$, which proves the assertion for both level sets.
\end{proof}

\begin{lemma}[No translation to the complement]\label{lem:noantiperiod}
An exceptional function has no translate equal to its complement: there is no $t\in\Z^d$ such that $u(y+t)=1-u(y)$ for all $y$.
\end{lemma}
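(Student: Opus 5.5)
Suppose for contradiction that $u(y+t)=1-u(y)$ for all $y$. Write $u=2-\sum_j U_j$ with $U_j(y)=\{\alpha_j\ell_j(y)+\beta_j\}$. Applying the identity twice gives $u(y+2t)=u(y)$, so $u$ is invariant under $2t$.

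The first case is $t\neq0$. Then $u$ is invariant under the subgroup generated by $2t$ together with the common kernel $K=\bigcap_j\ker\ell_j$. The subgroup $K$ has rank $d-2$ and leaves $u$ invariant by Remark~\ref{rem:ranktwo}. If $t\notin\spanq K$, then $L=K+\Z\,2t$ has rank $d-1$, and $u$ is $L$-invariant. Every coset $z+L$ then lies in a single level set, contradicting Lemma~\ref{lem:Elevels}. If instead $t\in\spanq K$, then some multiple $mt$ lies in $K$, so $\ell_j(t)=0$ for every $j$. In that case $u(y+t)=u(y)$, and combined with the assumed identity this gives $u=1-u$, which is impossible for a Boolean function. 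The same conclusion covers $t=0$ directly.

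The second case treats $t\neq0$ with $t\notin\spanq K$. The argument above already handles it, since $L$ then has rank exactly $d-1$ and Lemma~\ref{lem:Elevels} applies. The main point to verify is that $L$ really has rank $d-1$. This holds because $K$ is saturated of rank $d-2$ and $t$ lies outside its rational span. Lemma~\ref{lem:Elevels} is stated for subgroups of rank at least $d-1$, so it applies to $L$.

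A more computational route also works, and it serves as a cross-check. Reduce the identity modulo one. The identity reads $2-\sum_j U_j(y+t)=1-2+\sum_j U_j(y)$, which gives $\sum_j\{\alpha_j\ell_j(y+t)+\beta_j\}+\sum_j\{\alpha_j\ell_j(y)+\beta_j\}=3$. Modulo one this becomes $\sum_j \alpha_j\ell_j(t)+2\sum_j(\alpha_j\ell_j(y)+\beta_j)\in\Z$. One could then analyze this using equidistribution. The periodicity argument is cleaner, however, and I would present that one.
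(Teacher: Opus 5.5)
Your argument is correct, and it takes a different route from the paper's. The paper averages the identity $u(y+t)=1-u(y)$ over cosets of $H_1=\ker\ell_1$. By~\eqref{eq:hyperplanemean}, $U_2$ and $U_3$ average to $1/2$ there, which leaves $U_1(y+t)+U_1(y)=1$. A difference in a direction $e$ with $\ell_1(e)=1$ then gives $2\alpha_1\in\Z$.

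You instead turn the anti-period into the genuine period $2t$. You adjoin the rank-$(d-2)$ common kernel $K$ to get a rank-$(d-1)$ period group when $t\notin\spanq K$, and Lemma~\ref{lem:Elevels} rules this out, since the nonempty set $\{u=1\}$ would itself be codimension-one periodic. When $t\in\spanq K$, you correctly get $\ell_j(t)=0$ for every $j$, so $t$ is a period and $u=1-u$ is absurd.

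Your version is structural: it needs no computation beyond Lemma~\ref{lem:Elevels}. However, it genuinely uses the rank-two condition on $\ell_1,\ell_2,\ell_3$, through Remark~\ref{rem:ranktwo}, to make $K$ large enough. The paper's two-line computation uses only pairwise nonparallelism of the forms and irrationality of the slopes.

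Two small comments. First, your ``second case'' paragraph only repeats the first, and saturation of $K$ plays no role in the rank count. Second, the mod-one ``cross-check'' cannot be completed as written. For any Boolean exceptional $u$, each number $\sum_j(\alpha_j\ell_j(y)+\beta_j)$ is an integer. Hence $\sum_j\alpha_j\ell_j$ is integer-valued and $\sum_j\beta_j\in\Z$, and your congruence holds identically. Reducing modulo one discards exactly the information that the paper's real-valued averaging along $\ker\ell_1$ keeps.
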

\begin{proof}
Suppose such a $t$ exists. Average this identity over translates of $H_1=\ker\ell_1$. The other two fractional-part functions have mean $1/2$ on every such coset, so~\eqref{eq:hyperplanemean} gives
\[
 U_1(y+t)+U_1(y)=1\qquad(y\in\Z^d).
\]
Reduce modulo one and subtract the identity at $y$ from the identity at $y+e$, where $\ell_1(e)=1$. This gives $2\alpha_1\in\Z$, contradicting irrationality.
\end{proof}

\begin{proposition}[Simultaneous replacement of exceptional restrictions]\label{prop:graph}
Let $F_1,\ldots,F_k\subset\Z^d$ be nonempty finite sets, and let $b:\Z^d\to\{0,1\}$ satisfy $\ind_{F_i}*b=1$ for every $i$. Suppose that, for some $q\geq1$ and every $r\in\{0,\ldots,q-1\}^d$, the residue restriction
\[
 b_r(y)=b(r+qy),\qquad y\in\Z^d,
\]
is either invariant under a subgroup of rank $d-1$ or exceptional in the sense of~\eqref{eq:E} and its accompanying conditions. Then the family has a piecewise $(d-1)$-periodic joint co-tile.
\end{proposition}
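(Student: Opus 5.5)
The plan follows the outline in the introduction: rewrite each tiling equation residue class by residue class, show that an equation meeting an exceptional restriction has a rigid form, encode these equations as a graph, prove the graph bipartite, and replace exceptional restrictions by constants given by a two-coloring.

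\emph{Step 1: rewriting the equations.} Fix a tile $F=F_i$ and an output residue $s\in\{0,\ldots,q-1\}^d$. For each $f\in F$ write $s-f=r_f+qz_f$ with $r_f\in\{0,\ldots,q-1\}^d$ and $z_f\in\Z^d$. The equation $\ind_F*b=1$ on the coset $s+q\Z^d$ then becomes
\[
 \sum_{f\in F} b_{r_f}(y+z_f)=1\qquad(y\in\Z^d).
\]
Each summand is a translate of a residue restriction. Translates of exceptional functions are exceptional, and translates of codimension-one periodic functions are codimension-one periodic. So every summand is of one of the two types.

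\emph{Step 2: structure of an equation with an exceptional summand.} Suppose some summand $u=b_{r_g}(\cdot+z_g)$ is exceptional. Since all summands are $\{0,1\}$-valued and sum to $1$, any other summand $w$ satisfies $\{w=1\}\subset\{u=0\}$. If $w$ is codimension-one periodic, then $\{w=1\}$ is a codimension-one-periodic subset of $\{u=0\}$. By Lemma~\ref{lem:Elevels} this subset is empty, so $w\equiv0$. Hence only exceptional summands can be nonzero.

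Next take box averages over $\Z^d$. Each exceptional summand has density $1/2$ and the right-hand side has density $1$. Therefore exactly two tile points $g\ne g'$ of $F$ give exceptional summands, counted with multiplicity over tile points. These two summands satisfy
\[
 b_{r_g}(y+z_g)=1-b_{r_{g'}}(y+z_{g'}).
\]

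\emph{Step 3: the graph and bipartiteness.} Form the graph $\mathcal G$ whose vertices are the residues $r$ with $b_r$ exceptional. Each equation of Step 2 contributes an edge joining $r_g$ and $r_{g'}$, labelled by the relation
\[
 b_{r_g}(y)=1-b_{r_{g'}}(y+w),\qquad w=z_{g'}-z_g.
\]
If $r_g=r_{g'}$, this edge is a loop. The relation then says that a translate of the exceptional function $b_{r_g}$ equals its complement, which Lemma~\ref{lem:noantiperiod} forbids. So $\mathcal G$ has no loops.

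For a general odd closed walk, compose the labelled relations along the walk. Each edge contributes one complementation and one translation. An odd number of complementations yields $b_r(y)=1-b_r(y+t)$ for some $t\in\Z^d$, again contradicting Lemma~\ref{lem:noantiperiod}. Hence $\mathcal G$ is bipartite. Since $\mathcal G$ is finite, choose a coloring $\epsilon:V(\mathcal G)\to\{0,1\}$ in which adjacent vertices receive different colors.

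\emph{Step 4: replacement and verification.} Define $b'$ by setting $b'_r\equiv\epsilon(r)$ for exceptional $r$ and $b'_r=b_r$ otherwise. Check each rewritten equation from Step 1.
\begin{itemize}[label=--]
\item If no summand is exceptional, the equation involves only unchanged restrictions, so it still holds.
\item Otherwise, by Step 2 all periodic summands vanish identically and are unchanged. The two exceptional summands come from adjacent vertices, so they become the constants $\epsilon(r_g)$ and $1-\epsilon(r_g)$, whose sum is $1$.
\end{itemize}
Thus $\ind_{F_i}*b'=1$ for every $i$.

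Each restriction $b'_r$ is invariant under a rank-$(d-1)$ subgroup $L_r$, with constants included. So $\{b'=1\}$ is the disjoint union over residues $r$ of the sets $r+q\{b'_r=1\}$, and each of these is $qL_r$-periodic. This exhibits a piecewise $(d-1)$-periodic joint co-tile, as required.

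The main obstacle is the bookkeeping in Steps 2 and 3. One must count summands by tile points rather than by distinct functions, since two tile points may land in the same residue class with different translates. One must also make sure the translation labels compose consistently around a cycle, so that an odd closed walk really produces a translate-to-complement identity. Once the labelled relations are written in the form of Step 3, both points reduce directly to Lemmas~\ref{lem:Elevels} and~\ref{lem:noantiperiod}.
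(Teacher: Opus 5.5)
Your proposal is correct and is essentially the paper's proof. It uses the same residue-class rewriting of each tiling equation, and the same appeal to Lemma~\ref{lem:Elevels} plus averaging to show that an equation with an exceptional summand has exactly two such summands (counted by tile points) and no other nonzero ones; it then builds the same multigraph, excludes odd closed walks by Lemma~\ref{lem:noantiperiod}, and replaces exceptional restrictions by a proper two-coloring.
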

\begin{proof}
Let $\mathcal R=\{0,\ldots,q-1\}^d$ be the chosen set of residue representatives, and define
\[
 \begin{aligned}
 \mathcal R_{\mathrm{per}}
   &=\{r\in\mathcal R:b_r\text{ is codimension-one periodic}\},\\
 \mathcal R_{\mathrm{exc}}
   &=\{r\in\mathcal R:b_r\text{ is exceptional}\}.
 \end{aligned}
\]
Residues with $b_r\equiv0$ or $b_r\equiv1$ belong to $\mathcal R_{\mathrm{per}}$. The two sets are disjoint: an exceptional function has nonempty support, whereas Lemma~\ref{lem:Elevels} prevents that support from being codimension-one periodic. By hypothesis they exhaust $\mathcal R$. We will leave $b_r$ unchanged for $r\in\mathcal R_{\mathrm{per}}$ and replace it by a constant for $r\in\mathcal R_{\mathrm{exc}}$.

Fix $i\in\{1,\ldots,k\}$ and $r\in\mathcal R$. For each tile point $f\in F_i$, there are unique vectors $s(r,f)\in\mathcal R$ and $h(r,f)\in\Z^d$ such that
\[
 r-f=s(r,f)+qh(r,f).
\]
Here $r$ represents the output residue class where the tiling equation is evaluated, whereas $s(r,f)$ represents the residue class sampled by its summand indexed by $f$. Substituting $x=r+qy$ into $\sum_{f\in F_i}b(x-f)=1$ gives
\begin{equation}\label{eq:occurrences}
 \sum_{f\in F_i}b_{s(r,f)}(y+h(r,f))=1
 \qquad(y\in\Z^d).
\end{equation}
Thus a summand in this equation is a translate of a residue restriction. The sum is indexed by the original set $F_i$, not by its image in $(\Z/q\Z)^d$: distinct tile points contribute separate summands even when they give the same residue restriction or the same translated function.

Let
\[
 J_{i,r}=\{f\in F_i:s(r,f)\in\mathcal R_{\mathrm{exc}}\}
\]
be the set of tile points indexing exceptional summands in~\eqref{eq:occurrences}. Suppose $J_{i,r}$ is nonempty, choose $f_0\in J_{i,r}$, and put
\[
 u(y)=b_{s(r,f_0)}(y+h(r,f_0)).
\]
This function is exceptional because it is a translate of an exceptional restriction. All summands in~\eqref{eq:occurrences} are Boolean and nonnegative. Hence the support of every summand other than the one indexed by $f_0$ is contained in $\{u=0\}$. If $f\notin J_{i,r}$, its summand has codimension-one-periodic support. Lemma~\ref{lem:Elevels} therefore forces that summand to vanish identically. The equation reduces to
\[
 \sum_{f\in J_{i,r}}b_{s(r,f)}(y+h(r,f))=1.
\]
Every summand in this reduced equation has box average $1/2$. Averaging gives $|J_{i,r}|/2=1$, and thus
\[
 |J_{i,r}|=2\qquad\text{whenever }J_{i,r}\ne\varnothing.
\]
This equality counts tile points, so it retains any multiplicities arising from repeated residues.

Construct a finite undirected multigraph $\mathcal G$ with vertex set $\mathcal R_{\mathrm{exc}}$. For each pair $(i,r)$ with $J_{i,r}\ne\varnothing$, write $J_{i,r}=\{f_1,f_2\}$ and insert one edge joining
\[
 s=s(r,f_1)\quad\text{and}\quad t=s(r,f_2).
\]
Different pairs $(i,r)$ may give parallel edges; at this stage a loop is also allowed. There are at most $kq^d$ edges. Write $h_1=h(r,f_1)$ and $h_2=h(r,f_2)$. The reduced tiling equation is
\[
 b_s(y+h_1)+b_t(y+h_2)=1.
\]
After replacing $y$ by $y-h_1$, it becomes
\begin{equation}\label{eq:edgeidentity}
 b_t(y+\delta)=1-b_s(y),\qquad
 \delta=h_2-h_1.
\end{equation}
Attach the shift $\delta$ to this edge when traversed from $s$ to $t$, and attach $-\delta$ when it is traversed in the opposite direction.

For a walk with vertices $s_0,s_1,\ldots,s_m$ and successive traversal shifts $\delta_1,\ldots,\delta_m$, repeated application of~\eqref{eq:edgeidentity} gives
\begin{equation}\label{eq:walkcomposition}
 b_{s_m}\left(y+\sum_{j=1}^m\delta_j\right)
 =\frac{1-(-1)^m}{2}+(-1)^m b_{s_0}(y).
\end{equation}
If the walk is closed and $m$ is odd, this identity says that a translate of the exceptional function $b_{s_0}$ equals $1-b_{s_0}$. Lemma~\ref{lem:noantiperiod} rules this out. Thus $\mathcal G$ has no odd closed walk, including no loop, and is bipartite.

Choose a proper two-coloring
\[
 \varepsilon:\mathcal R_{\mathrm{exc}}\longrightarrow\{0,1\},
 \qquad \varepsilon(s)+\varepsilon(t)=1
 \quad\text{for every edge joining }s\text{ and }t.
\]
Define the new residue restrictions by
\[
 \widetilde b_r(y)=
 \begin{cases}
  b_r(y),&r\in\mathcal R_{\mathrm{per}},\\
  \varepsilon(r),&r\in\mathcal R_{\mathrm{exc}},
 \end{cases}
 \qquad y\in\Z^d,
\]
and define $\widetilde b:\Z^d\to\{0,1\}$ by
\[
 \widetilde b(r+qy)=\widetilde b_r(y)
 \qquad(r\in\mathcal R,\ y\in\Z^d).
\]
We verify every tiling equation using its index pair $(i,r)$. If $J_{i,r}=\varnothing$, all its summands are unchanged. If $J_{i,r}=\{f_1,f_2\}$, the summands indexed by $F_i\setminus J_{i,r}$ were identically zero and remain so. Its two remaining summands are now the constants $\varepsilon(s(r,f_1))$ and $\varepsilon(s(r,f_2))$, whose sum is one by the edge associated with $(i,r)$. Hence $\ind_{F_i}*\widetilde b=1$ for every $i$. The same coloring therefore preserves the entire family of tiling equations simultaneously.

Finally, for each $r\in\mathcal R$ define
\[
 A_r=\{r+qy:y\in\Z^d,\ \widetilde b_r(y)=1\}.
\]
If $r\in\mathcal R_{\mathrm{per}}$, choose a rank-$(d-1)$ period subgroup $L_r$ of $b_r$; then $A_r$ is invariant under $qL_r$. If $r\in\mathcal R_{\mathrm{exc}}$, the set $A_r$ is either empty or the whole coset $r+q\Z^d$, and is invariant under $q\Z^d$. The sets $A_r$ are pairwise disjoint, and their union is the support of $\widetilde b$. Thus they give a finite disjoint decomposition into codimension-one-periodic sets of a joint co-tile, as required.
\end{proof}

\begin{proof}[Proof of Theorem~\ref{thm:periodize}]
Apply Theorem~\ref{thm:normal} with $p=1$ to $a=\ind_A$. The selected Boolean configuration $b$ lies in the translation orbit closure of $a$, so it remains a joint co-tile indicator for the entire family. Its residue restrictions satisfy the hypotheses of Proposition~\ref{prop:graph}. That proposition supplies a piecewise $(d-1)$-periodic joint co-tile. Theorem~\ref{thm:MSSreplacement} supplies a fully periodic joint co-tile.
\end{proof}

\begin{proof}[Proof of Theorem~\ref{thm:main} and Corollary~\ref{cor:subfamily}]
Let $A$ be a joint co-tile for the family under consideration. If a tile is a singleton, its equation forces $A=\Z^d$; the remaining equations then force every tile to be a singleton, and the conclusion follows. Otherwise, apply Theorem~\ref{thm:MSSdecomp} to the independent $(d-1)$-subfamily. It gives a bounded real codimension-one decomposition of $\ind_A$. Theorem~\ref{thm:periodize}, applied to all the tiles in the family, gives the required fully periodic joint co-tile.
\end{proof}

The construction uses two distinct operations. Theorem~\ref{thm:normal} first selects a configuration in the translation orbit closure, thereby preserving every closed translation-invariant constraint. Proposition~\ref{prop:graph} then replaces its exceptional residue restrictions by constants while leaving its codimension-one-periodic restrictions unchanged. The latter operation preserves the given equations because every summand is nonnegative and each equation has right-hand side exactly $1$. It need not preserve arbitrary additional constraints. In particular, no periodic replacement is asserted here for convolution systems with signed coefficients or for multiple-tiling equations whose right-hand side is an integer greater than $1$.

\section{Consequences}\label{sec:consequences}

We first recall the independence part of \cite[Theorem 1.9]{MSS24}. The construction preserves a prescribed periodic co-tile.

\begin{lemma}[Completing a periodic co-tile]\label{lem:completion}
Let $\{0\}\subsetneqq F\subset\Z^d$ be finite, and suppose that $F\oplus A=\Z^d$ with $A$ fully periodic. There are $d-1$ further finite sets containing $0$ which, together with $F$, form an independent $d$-tuple and all have $A$ as a co-tile.
\end{lemma}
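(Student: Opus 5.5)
The idea is to build the companions as tiles of the form $G_j=\{0,w_j,2w_j,\ldots,(n-1)w_j\}$, arithmetic progressions along well-chosen directions. Each direction should be a translate direction relative to a sublattice contained in $A-A$'s period group.

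Let $\Lambda\le\Z^d$ be a finite-index period lattice of $A$, so $A=A_0+\Lambda$ with $A_0$ a finite set of representatives. The condition $F\oplus A=\Z^d$ says that $F\oplus A_0$ is a complete residue system modulo $\Lambda$. Set $n=|F|$. Then $[\Z^d:\Lambda]=n|A_0|$.

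Step one is to find a suitable direction. Choose $w\in\Z^d$ whose class in $\Z^d/\Lambda$ has order exactly $n$, and such that the cyclic group $\langle w\rangle+\Lambda$ acts so that $\{0,w,\ldots,(n-1)w\}\oplus A=\Z^d$. The cleanest way is to enlarge things first. Replace $\Lambda$ by $M\Lambda$ for a suitable $M$; this is still a period lattice. Then take $w\in\Lambda$ itself with $w\notin n\Lambda'$ in a way that makes $\{0,\ldots,(n-1)w\}\oplus A=\Z^d$.

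Concretely, I would use the following. Pick a basis $u_1,\dots,u_d$ of $\Lambda$. Then $\Lambda'=\Z nu_1+\Z u_2+\cdots+\Z u_d$ has index $n$ in $\Lambda$. The set $D=\{0,u_1,\ldots,(n-1)u_1\}$ is a complete residue system for $\Lambda/\Lambda'$. Hence $D\oplus\Lambda'=\Lambda$.

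The problem is that $A$ is $\Lambda$-periodic, not a co-tile of $D$. So instead I would reverse roles and find a sublattice version of $A$. Define $A'=A_0+\Lambda'$, which is a co-tile of $F\oplus D$ and not of $F$. This forces a modification, so it is not the plan.

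The better plan is to start from the given $A$ and find companions $G$ with $G\oplus A=\Z^d$ directly. Note that $G\oplus A=\Z^d$ holds whenever $G$ is obtained from $F$ by replacing each $f\in F$ by $f+\lambda_f$ with $\lambda_f\in\Lambda$. Indeed, $G\oplus A_0$ still represents every residue modulo $\Lambda$ exactly once, and $A$ is $\Lambda$-periodic.

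So take $G_j=\{0\}\cup\{f+\lambda^{(j)}_f:f\in F\setminus\{0\}\}$, with $0\in F$ kept fixed (having first translated $F$ so that $0\in F$). Each $G_j$ has $A$ as a co-tile by this observation.

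It remains to choose the vectors $\lambda^{(j)}_f\in\Lambda$, for $1\le j\le d-1$ and $f\in F\setminus\{0\}$, so that every selection $v_0\in F\setminus\{0\}$ and $v_j\in G_j\setminus\{0\}$ is linearly independent. There are finitely many selections. Each selection fails to be independent exactly when a determinant, a polynomial in the coordinates of the $\lambda$'s, vanishes.

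This determinant polynomial is not identically zero on $\Lambda^{N}$. To see this, put $\lambda^{(j)}_f=t\,u^{(j)}$ with large $t$, where $u^{(j)}$ is chosen so that $v_0,u^{(1)},\ldots,u^{(d-1)}$ is independent; then the leading coefficient in $t$ is nonzero. Here $\Lambda$ has full rank, so such $u^{(j)}$ exist for any given $v_0\ne0$. A finite product of nonzero polynomials is nonzero, and it cannot vanish on all of the Zariski-dense set $\Lambda^N$. Hence a good choice exists.

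The main obstacle is the genericity step when the same $\lambda^{(j)}$ must serve all selections at once. I would handle it by treating all $\lambda^{(j)}_f$ as independent variables. Then, for each fixed selection, I would exhibit one specialization with nonzero determinant as above, specializing the unused variables arbitrarily. Nonvanishing of the product polynomial then follows from $\Q[\ldots]$ being an integral domain.
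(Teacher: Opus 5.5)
Your final argument is correct and essentially matches the paper's proof: you move each nonzero point of $F$ within its coset of a period lattice, so that $A$ remains a co-tile, and you choose the moves by polynomial nonvanishing on the lattice. The paper adds the companions one at a time, placing each new point outside the finitely many proper subspaces spanned by earlier selections, whereas you choose all $d-1$ companions at once through a product of determinants, which works equally well; the opening discussion of arithmetic-progression tiles and of $\Lambda'$ is abandoned and should simply be deleted.
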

\begin{proof}
Fix a finite-index period subgroup $L$ of $A$. Distinct points of $F$ lie in distinct cosets of $L$: otherwise their translates of the nonempty set $A$ would coincide, contrary to uniqueness in the tiling.

Suppose an independent tuple of $t<d$ tiles with joint co-tile $A$ has been constructed, beginning with $F$ when $t=1$. Let $\mathcal V$ be the finite collection of rational subspaces spanned by all choices of one nonzero vector from each of these $t$ tiles. Every $V\in\mathcal V$ has dimension $t<d$.

For each $f\in F\setminus\{0\}$, we can choose
\[
 g_f\in(f+L)\setminus\bigcup_{V\in\mathcal V}V.
\]
To justify this avoidance, write $L=B\Z^d$ with an invertible integer matrix $B$. For each $V\in\mathcal V$, choose a nonzero rational linear form $\lambda_V$ vanishing on $V$. The polynomial
\[
 \prod_{V\in\mathcal V}\lambda_V(f+Bz)
\]
is nonzero, since each factor has a nonzero linear part. A nonzero polynomial over $\Q$ cannot vanish on all of $\Z^d$: induction on the number of variables reduces this assertion to the fact that a nonzero univariate polynomial has only finitely many roots. Some integer $z$ therefore gives the required $g_f$.

Set $g_0=0$ and $G=\{g_f:f\in F\}$. Distinctness of the cosets $f+L$ shows that $G$ has $|F|$ elements. Since $g_f+A=f+A$ for each $f\in F$, we have $G\oplus A=\Z^d$. The avoidance condition says exactly that adjoining $G$ preserves independence. Repeating the construction gives an independent tuple of length $d$.
\end{proof}

\begin{proof}[Proof of Corollary~\ref{cor:companions}]
If $F$ has a fully periodic co-tile $A$, apply Lemma~\ref{lem:completion} and retain $d-2$ of the added tiles. The resulting $(d-1)$-tuple is independent and shares the same $A$. Conversely, a joint co-tile for the independent tuple $(G_1,\ldots,G_{d-2},F)$ can be replaced by a fully periodic joint co-tile by Theorem~\ref{thm:main}. This is in particular a fully periodic co-tile for $F$.
\end{proof}

\begin{proof}[Proof of Corollary~\ref{cor:decision}]
The input condition is a finite one. For each candidate $(d-1)$-subfamily, check the rational rank of every choice of one nonzero vector from each tile. If normalizing points have not been specified, there are only finitely many choices of such points to check. All these rank computations use exact rational arithmetic. If any tile is a singleton, its equation forces the joint co-tile to be $\Z^d$; the family has a joint co-tile exactly when every tile is a singleton. We may therefore assume that all tiles are nontrivial.

Run the following two searches in parallel. For the positive search, enumerate $q=1,2,\ldots$ and all Boolean functions
\[
 b:(\Z/q\Z)^d\longrightarrow\{0,1\}.
\]
For each function, check the finite collection of equations
\begin{equation}\label{eq:periodicsearch}
 \sum_{f\in F_i}b(r-f)=1
 \qquad\bigl(1\leq i\leq k,\ r\in(\Z/q\Z)^d\bigr).
\end{equation}
The sum is over the original tile points: if two points of $F_i$ have the same residue, both occurrences are retained. A successful check produces a joint co-tile by taking the inverse image of $\{r:b(r)=1\}$ in $\Z^d$. Conversely, every fully periodic set is invariant under $q\Z^d$ for some $q\geq1$, since every finite-index subgroup contains such a scalar subgroup. By Corollary~\ref{cor:subfamily}, the positive search therefore succeeds whenever a joint co-tile exists.

For the negative search, let
\[
 B_n=[-n,n]^d\cap\Z^d,
 \qquad D_n=\bigcup_{i=1}^k(B_n-F_i).
\]
For each $n$, test all assignments $b:D_n\to\{0,1\}$ against
\begin{equation}\label{eq:obstructionsearch}
 \sum_{f\in F_i}b(x-f)=1
 \qquad(1\leq i\leq k,\ x\in B_n).
\end{equation}
If none satisfies these equations, they form the required finite unsatisfiable collection.

If every such finite system were satisfiable, let $X_n\subset\{0,1\}^{\Z^d}$ be the set of configurations satisfying~\eqref{eq:obstructionsearch}. Each $X_n$ would be nonempty, because an assignment on $D_n$ extends arbitrarily to the rest of the lattice. The sets $X_n$ are closed and nested. Compactness would give a point in $\bigcap_nX_n$, and this point would satisfy every joint tiling equation. Thus the negative search succeeds whenever no joint co-tile exists.

Every individual stage of either search is finite. Alternating their computational steps gives an algorithm which terminates on every input in the asserted class, with exactly one of the two certificates. Both searches and the independence checks are specified uniformly in $d$, so the dimension may be part of the input.
\end{proof}

\begin{remark}
The decision procedure also gives a computable scalar period bound in terms of $d$, the number $k$ of tiles, and a coordinate bound $R$ with $F_i\subset[-R,R]^d$. There are only finitely many families within these bounds. Check which ones contain an independent $(d-1)$-subfamily, run the decision procedure on each, and take the maximum of the periods returned on the positive instances, or $1$ if there are none. This argument gives no explicit quantitative estimate for that bound.
\end{remark}

\section{Examples}\label{sec:examples}

The first two examples separate independence from property $(\star)$.

\begin{example}[Failure of $(\star)$ in every order]\label{ex:coordinate}
For $d\geq3$, set
\[
 F_i=\{0,e_i,2e_i,3e_i\}\quad(1\leq i\leq d-1),
 \qquad
 A=\left\{x\in\Z^d:\sum_{j=1}^dx_j\equiv0\pmod4\right\}.
\]
Every choice of nonzero vectors from the tiles consists of nonzero multiples of distinct coordinate vectors, so the tuple is independent. Under the homomorphism $x\mapsto\sum_jx_j\pmod4$, each tile maps bijectively to $\Z/4\Z$. Hence $A$ is a fully periodic joint co-tile.

Every nonzero choice has the same rational span, namely $\spanq(e_1,\ldots,e_{d-1})$. Changing the vector selected from the first tile, for example from $e_1$ to $2e_1$, preserves this span and violates~\eqref{eq:star}. The same argument applies after any permutation of the tiles, so property $(\star)$ fails in every order.
\end{example}

\begin{example}[Two full-rank tiles in $\Z^3$]\label{ex:fullrank}
Let
\[
 F_1=\{0,e_1,e_2,e_3\},\qquad
 F_2=\{0,e_1+e_2,e_1+2e_2,e_1+e_2+e_3\}.
\]
The determinant of the three displayed nonzero vectors is $1$ for each tile, so both tiles span $\Q^3$. No nonzero point of $F_2$ is parallel to a coordinate axis. Thus the pair is independent.

The pairs $(e_1,e_1+e_2)$ and $(e_2,e_1+2e_2)$ span the same $xy$-plane. Their first entries differ, so $(F_1,F_2)$ fails $(\star)$; their second entries also differ, so $(F_2,F_1)$ fails $(\star)$. The homomorphism
\[
 \chi(x,y,z)=x+2y+3z\pmod4
\]
takes the values $0,1,2,3$ on the displayed points of $F_1$, and $0,3,1,2$ on those of $F_2$. Consequently $\ker\chi$ is a fully periodic joint co-tile.
\end{example}

\begin{example}[A joint co-tile that is not fully periodic]\label{ex:nonperiodic}
For $d\geq2$, let
\[
 F_i=\{0,e_i\}\quad(1\leq i\leq d-1),
 \qquad
 A=\left\{x\in\Z^d:
 \sum_{i=1}^{d-1}x_i\equiv\ind_{\{0\}}(x_d)\pmod2\right\}.
\]
The tiles are independent, and translation by $e_i$ reverses the displayed parity condition. Therefore $F_i\oplus A=\Z^d$ for every $i$.

If $t\in\Z^d$ is a period of $A$, then
\[
 \ind_{\{0\}}(n+t_d)-\ind_{\{0\}}(n)
 \equiv\sum_{i=1}^{d-1}t_i\pmod2
 \qquad(n\in\Z).
\]
For $t_d\ne0$, the left-hand side equals $1$ modulo $2$ at exactly two values of $n$ and is zero elsewhere. It cannot be constant. Thus every period of $A$ has $t_d=0$, and the displayed congruence then requires $\sum_{i=1}^{d-1}t_i$ to be even. Conversely, every vector satisfying these two conditions is a period. The full period group is therefore
\[
 \left\{t\in\Z^d:t_d=0,\quad
     \sum_{i=1}^{d-1}t_i\equiv0\pmod2\right\},
\]
which has rank $d-1$. Hence $A$ is codimension-one periodic but not fully periodic. The same family has the fully periodic joint co-tile
\[
 A'=\left\{x\in\Z^d:\sum_{i=1}^{d-1}x_i\equiv0\pmod2\right\}.
\]
\end{example}

The final example explains the distinction between bounded real decompositions and disjoint periodic partitions at the level of Boolean configurations.

\begin{example}[An exceptional Boolean configuration]\label{ex:carry}
Let $d\geq2$, and choose real numbers $\alpha,\beta,\gamma$ such that $1,\alpha,\beta,\gamma$ are linearly independent over $\Q$. Define
\begin{align*}
 u(x)={}&2-\{\alpha x_1+\beta\}
          -\{\alpha x_2+\gamma\}\\
        &\hspace{18mm}-\{-\alpha(x_1+x_2)-\beta-\gamma\}.
\end{align*}
The three quantities inside the fractional parts sum to zero, and none is an integer. Their fractional parts therefore sum to either $1$ or $2$, so $u$ is Boolean. Each fractional-part summand is invariant under a rank-$(d-1)$ subgroup, with respective defining linear forms $x_1$, $x_2$, and $x_1+x_2$.

These forms are pairwise nonparallel and span a two-dimensional rational space. The slope-intercept pairs are $(\alpha,\beta)$, $(\alpha,\gamma)$, and $(-\alpha,-\beta-\gamma)$; each pair is rationally independent together with $1$. Thus $u$ has the exceptional form to which Lemma~\ref{lem:Elevels} applies. Both level sets are nonempty: the first two phases are jointly equidistributed modulo one, and their fractional parts have sum both less than and greater than $1$. Neither level set contains a nonempty codimension-one-periodic subset, by that lemma. In particular, neither level set is piecewise $(d-1)$-periodic. Hence a bounded real codimension-one decomposition of a Boolean configuration need not be a disjoint periodic partition.
\end{example}

\section*{Declaration of generative AI use}
OpenAI's ChatGPT (Codex) was used in the development and preparation of this manuscript for mathematical exploration, proof drafting and checking, literature searches, and language editing. Responsibility for the results and their presentation rests with the authors.

\end{document}